\newtheorem{theorem}{Theorem}[section]
\newtheorem{definition}[theorem]{Definition}
\newtheorem{corollary}[theorem]{Corollary}
\newtheorem{lemma}[theorem]{Lemma}
\newtheorem{prop}[theorem]{Proposition}
\newtheorem{remark}{Remark}
\theoremstyle{definition}
\newtheorem{example}[theorem]{Example}
\newcommand{\spn}{\text{span}}
\newcommand{\bR}{\mathbb R}
\newcommand{\bC}{\mathbb C}
\newcommand{\bZ}{\mathbb Z}
\newcommand{\bN}{\mathbb N}
\newcommand{\bT}{\mathbb T}
\newcommand{\bD}{\mathbb D}
\newcommand{\C}{\mathcal C}
\renewcommand{\H}{\mathcal H}
\newcommand{\K}{\mathcal K}
\newcommand{\M}{\mathcal M}
\newcommand{\T}{\mathcal T}
\newcommand{\AND}{\text{ and }}
\renewcommand{\to}{\rightarrow}
\newcommand{\ol}{\overline}
\newcommand{\id}{\text{id}}
\newcommand{\diag}{\text{diag}}
\title{Relative $K$-theory for $C^*$-algebras}
\author{Mitch Haslehurst}
\email{mjthasle@gmail.com}
\keywords{C$^*$-algebra, $K$-theory, homology}
\subjclass[2020]{46L80 (primary), 46M20 (secondary)}
\begin{document}

\begin{abstract}
Given C$^*$-algebras $A$ and $B$ and a $^*$-homomorphism $\phi:A\to B$, we adopt the portrait of the relative $K$-theory $K_*(\phi)$ due to Karoubi using Banach categories and Banach functors. We show that the elements of the relative groups may be represented in a simple form. We prove the existence of two six-term exact sequences, and we use these sequences to deduce the fact that the relative theory is isomorphic, in a natural way, to the $K$-theory of the mapping cone.
\end{abstract}

\maketitle

\section{Introduction}
\label{intro}

In this paper we develop a portrait of relative $K$-theory for C$^*$-algebras by following an approach due to Karoubi that uses Banach categories and Banach functors. For every $^*$-homomorphism $\phi:A\to B$ between C$^*$-algebras $A$ and $B$, we produce two abelian groups $K_0(\phi)$ and $K_1(\phi)$ that give information about how the $K$-theory of $A$ and $B$ are related through $\phi$. In fact, the assignments $\phi\mapsto K_0(\phi)$ and $\phi\mapsto K_1(\phi)$ may be regarded as functors in a natural way, and through this we obtain a homology theory that satisfies Bott periodicity (the long exact sequence being the one in part (ii) of Theorem \ref{main}), and a tight connection between the $K$-theory of $A$ and $B$, the induced maps $\phi_*:K_*(A)\to K_*(B)$, and $K_*(\phi)$ is contained in the six-term exact sequence in part (i) of Theorem \ref{main}. 
	
The notion of relative $K$-theory is not new in the subject of operator algebras. Indeed, references such as \cite{blackadar} and \cite{higsonroe} contain a concise exposition under the assumptions that $A$ is unital, $B$ is a quotient of $A$ by some closed, two-sided ideal $I$, and $\phi$ is the quotient map. The relative group produced is denoted $K_0(A,A/I)$ (although the notation varies throughout the literature). These assumptions are quite reasonable, since $K_0(A,A/I)$ provides the noncommutative generalization of the relative group $K^0(X,Y)$ in topological $K$-theory, where $X$ is a compact space and $Y$ is a closed subset of $X$. The key feature of these groups in both cases, commutative or not, is that they satisfiy excision: they depend only on a smaller substructure in question, namely $X-Y$ in the topological case and $I$ in the noncommutative case. Specifically, the group $K^0(X,Y)$ is isomorphic to the group $K^0(X/Y,\{y\})$, where $\{y\}$ is the set $Y$ collapsed to a point, and the group $K_0(A,A/I)$ is isomorphic to the group $K_0(\tilde I, \bC)$, where $\tilde I$ is the unitization of $I$ and we identify $\tilde I/I$ with $\bC$.
	
To obtain a relative theory for a more general $^*$-homomorphism, we appeal to a construction of Karoubi in \cite{karoubi}. The approach is to describe the elements of the relative groups using triples consisting of two objects and a morphism from a Banach category. This generalizes the classical method of producing $K$-groups via vector bundles as seen in Atiyah's seminal work \cite{atiyah}: $K^0(X,Y)$ may be constructed via triples of the form $(E,F,\alpha)$, where $E$ and $F$ are vector bundles over $X$ and $\alpha:E|_Y\to F|_Y$ is an isomorphism between the bundles $E$ and $F$ when restricted to $Y$. This approach may appear somewhat basic because it is at odds with the mapping cone, a shortcut seen in both topological and operator $K$-theory. Indeed, analogous versions of the six-term exact sequences in the main result here (Theorem \ref{main}) are obtained very easily using standard methods if one uses $K_*(C_\phi)$ as the definition of relative $K$-theory, where $C_\phi$ denotes the mapping cone of $\phi:A\to B$. Moreover, it turns out that the two portraits are isomorphic in a natural way (part (iv) of Theorem \ref{main}). It is therefore reasonable to ask why one would employ an alternative portrait at all.
	
First, there is much more freedom in selecting the elements from the algebras that represent elements of the relative groups via the setup in \cite{karoubi}, which makes viewing and working with the groups easier in many situations. Second, certain maps in the six-term exact sequences are easier to compute. All in all, the resulting presentation is much simpler to work with, and applications of a simpler portrait in the context of C$^*$-algebras are beginning to make their way into the literature. Indeed, the proof of the excision theorem in \cite{putnam3}, which is a significant generalization of its predecessor in \cite{putnam2} (where the mapping cone was used), rests heavily on the new presentation, particularly on the notions of "isomorphism" of triples and "elementary" triples, to address certain fine, technical details. A portrait using partial isometries is developed in \cite{putnam2}, and in fact, the map $\kappa$ constructed there is essentially the same as $\Delta_\phi$ in part (iv) of Theorem \ref{main}, the difference being that $\Delta_\phi$ is a functorially induced group isomorphism, while $\kappa$ is a bijective map constructed concretely. Also worth mentioning is recent work on groupoid homology \cite{gabe}, where a portrait of the relative $K_0$-group using triples is presented in order to elucidate the connections between $K$-theory and homology. An isomorphism between the resulting monoid and the $K_0$-group of the mapping cone is constructed under the assumption that $A$ and $B$ are unital and $\phi$ is unit-preserving. The unital assumption can be done away with if $\phi$ is nondegenerate and $A$ contains an approximate identity of projections. Here, the general description of the relative groups allows us to do away with these assumptions. To provide further justification, we elaborate on the $K_0$- and $K_1$-groups separately.
	
It is a standard fact that the three usual notions of equivalence of projections, Murray-von Neumann, unitary, and homotopy, are all stably equivalent; in other words, they are the same modulo passing to matrix algebras. In constructing a relative $K_0$-group, it is therefore necessary to select a notion of equivalence with which to build the elements. The mapping cone $C_\phi$ is made from paths of projections in $B$ with one endpoint equal to a scalar projection and the other endpoint in the image of $\phi$. Therefore, in effect, $K_0(C_\phi)$ catalogues projections arising from $A$ that are homotopic when moved to $B$ via $\phi$. It is often more desirable to describe equivalences of projections using partial isometries (see Example \ref{diag}), from which the newer portrait is built. It is also worth mentioning that, although both portraits often require unitizations of the algebras involved, the appended unit seems to be less of a hindrance in the newer portrait.
	
As for $K_1$, homotopy is a much more natural equivalence and hence the portrait of the relative $K_1$ group gets less of a makeover than that of $K_0$. In fact, the two portraits are more or less the same. However, we draw a useful property from \cite{karoubi} which deserves to be mentioned. It is possible to define (ordinary and relative) $K_1$-groups more generally using partial unitaries (elements which are partial isometries and normal) rather than unitaries alone. This more general representation of group elements is especially convenient if one of (or both) $A$ and $B$ are not unital but contain nontrivial projections, such as $\K$, the compact operators on a separable Hilbert space (see Example \ref{gamma}).
	
The goal of the paper is to develop a clear picture of relative $K$-theory for C$^*$-algebras using the setup in \cite{karoubi}, as an alternative to the mapping cone. We remark that a preliminary development of the picture may be found in \cite{putnam3}, where the relative $K_0$-group of an inclusion $A'\subseteq A$ is described using this approach. We also remark that, although the intention in \cite{karoubi} is mainly to develop topological $K$-theory, the setup lends itself quite well to C$^*$-algebras.
	
The paper is organized as follows. In section 2 we state the theorems and discuss some examples. In section 3 we present the definition of relative $K$-theory in the context of C$^*$-algebras and show that the elements of the relative groups can be represented in a simple form. In section 4 we prove the results.

\section*{\normalsize Acknowledgements}

The content of this paper constitutes a portion of the research conducted for my PhD dissertation. I am grateful to my advisor Ian F. Putnam for suggesting the project to me, and for numerous helpful discussions. I also send my thanks to the referee for their careful reading and useful suggestions.

\section{Summary and examples}

Before we begin, we state a rather important remark regarding notation.
 
\begin{remark}
 In \cite{rordam}, the symbols $K_0(\phi)$ and $K_1(\phi)$ are used to denote the group homomorphisms $K_0(A)\to K_0(B)$ and $K_1(A)\to K_1(B)$ induced by $\phi$. Throughout this paper, the symbols $K_0(\phi)$ and $K_1(\phi)$ will be used to denote the relative groups, not group homomorphisms. Induced maps will instead be denoted more classically as $\phi_*$.
\end{remark}

\begin{definition}
We define \textbf{C$^*$-hom} to be the following category. The objects of \textbf{C$^*$-hom} are $^*$-homomorphisms $\phi:A\to B$, where $A$ and $B$ are C$^*$-algebras. A morphism from the $^*$-homomorphism $\phi:A\to B$ to the $^*$-homomorphism $\psi:C\to D$ is a pair $(\alpha,\beta)$ of $^*$-homomorphisms $\alpha:A\to C$ and $\beta:B\to D$ such that the diagram
\begin{equation}\label{comm}
\begin{tikzcd}
A \arrow[r, "\alpha"] \arrow[d, "\phi"] &   C \arrow[d, "\psi"]              \\
B \arrow[r, "\beta"]              & D           
\end{tikzcd}
\end{equation}
commutes. If $(\alpha,\beta)$ is a morphism from $\phi:A\to B$ to $\psi:C\to D$ and $(\gamma,\delta)$ is a morphism from $\psi:C\to D$ to $\eta:E\to F$, their composition is $(\gamma\circ\alpha,\delta\circ\beta)$. The identity morphism from $\phi:A\to B$ to itself is \emph{$(\id_A,\id_B)$}, where \emph{$\id_A:A\to A$} is the identity map, \emph{$\id_A(a)=a$} for all $a$ in $A$.
\end{definition}

As the reader can easily check, the class of $^*$-homomorphisms that map to the zero C$^*$-algebra, $\phi:A\to\{0\}$, forms a subcategory of \textbf{C$^*$-hom} that is isomorphic to the category of C$^*$-algebras in an obvious way. If one restricts to this subcategory, the usual definition of complex $K$-theory $K_*(A)$ for C$^*$-algebras is recovered through the construction of the relative theory $K_*(\phi)$.

We outline a simplified picture of the relative groups $K_0(\phi)$ and $K_1(\phi)$ to help understand the results and examples. Full definitions will be given in section 3.
 
The group $K_0(\phi)$ is made from triples $(p,q,v)$, where $p$ and $q$ are projections in some matrix algebras over $\tilde A$, the unitization of $A$, and $v$ is an element in a matrix algebra over $\tilde B$ such that $v^*v=\phi(p)$ and $vv^*=\phi(q)$ (if $A$ and $B$ are unital and $\phi(1)=1$, we may ignore unitizations, see Proposition \ref{unit}). The triples are sorted into equivalence classes, denoted $[p,q,v]$, and are given a well-defined group operation by the usual block diagonal sum,
\[[p,q,v]+[p',q',v']=[p\oplus p',q\oplus q',v\oplus v']\]
For two triples $(p,q,v)$ and $(p',q',v')$ to yield the same equivalence class, $p$ and $p'$ must be (at least stably) Murray-von Neumann equivalent in $\tilde A$, as must be $q$ and $q'$. Moreover, elements $c$ and $d$ implementing such equivalences must play well with $v$ and $v'$ in that we require $\phi(d)v=v'\phi(c)$.
 
$K_1(\phi)$ is made from triples $(p,u,g)$, where $p$ is a projection in $M_\infty(\tilde A)$, $u$ is a unitary in $pM_\infty(\tilde A)p$, and $g$ is a unitary in $C([0,1])\otimes\phi(p)M_\infty(\tilde B)\phi(p)$ such that $g(0)=\phi(p)$ and $g(1)=\phi(u)$. The triples are sorted into equivalence classes, denoted $[p,u,g]$, and are given a well-defined group operation by diagonal sum as before, although we have the formula
\[[p,u,g]+[p',u',g']=[p,uu',gg']\]
if $p=p'$. For two triples $(p,u,g)$ and $(p',u',g')$ to yield the same equivalence class, $p$ and $p'$ must be (at least stably) Murray-von Neumann equivalent in $\tilde A$, and a partial isometry $v$ implementing such an equivalence must satisfy $vu=u'v$ and $\phi(v)g(s)=g'(s)\phi(v)$ for $0\leq s\leq1$. The equivalence may also be described as stable homotopy: $u$ and $u'$ must be (at least stably) homotopic, as must be $g$ and $g'$. Moreover, such homotopies $u_t$ and $g_t$ must satisfy $g_t(1)=\phi(u_t)$ for $0\leq t\leq1$.
    
The assignments $\phi\mapsto K_0(\phi)$ and $\phi\mapsto K_1(\phi)$ are functors, as follows (see Proposition \ref{induce} for a proof). If we have the commutative diagram (\ref{comm}), that is, a morphism $(\alpha,\beta)$ from $\phi$ to $\psi$, then there are group homomorphisms $(\alpha,\beta)_*:K_j(\phi)\to K_j(\psi)$ for $j=0,1$ that satisfy $(\alpha,\beta)_*([p,q,v])=[\alpha(p),\alpha(q),\beta(v)]$ and $(\alpha,\beta)_*([p,u,g])=[\alpha(p),\alpha(u),\beta(g)]$.

\begin{theorem}\label{main}
The constructions in section 3 produce, for every integer $n\geq0$, a functor $K_n$ from the category \textbf{C$^*$-hom} to the category of abelian groups that satisfies the following properties.
\begin{enumerate}[(i)]
    \item If $\phi:A\to B$ is a $^*$-homomorphism of C$^*$-algebras $A$ and $B$, the groups $K_0(\phi)$ and $K_1(\phi)$ fit into the six-term exact sequence
    \begin{center}
	    \begin{tikzcd}
K_1(B) \arrow[rr, "\mu_0"] &  & K_0(\phi) \arrow[rr, "\nu_0"]  &  & K_0(A) \arrow[dd, "\phi_*"]      \\
                                &  &                                  &  &                                  \\
K_1(A) \arrow[uu, "\phi_*"]     &  & K_1(\phi) \arrow[ll, "\nu_1"'] &  & K_0(B) \arrow[ll, "\mu_1"']
\end{tikzcd}
	\end{center}
	The maps $\nu_0$ and $\nu_1$ are given by the formulas
    \[\nu_0([p,q,v])=[p]-[q]\qquad\nu_1([p,u,g])=[u+1_n-p]\]
    where $p$ and $q$ are projections in $M_n(\tilde A)$. The maps $\mu_0$ and $\mu_1$ are given by the formulas
    \[\mu_0([u])=[1_n,1_n,u]\qquad\mu_1([p]-[q])=[1_n,1_n,f_pf_q^*]\]
    where $u$ is a unitary in $M_n(\tilde B)$, and $f_p(t)=e^{2\pi itp}$ for a projection $p$ in $M_n(\tilde B)$. If $\phi(a)=0$ for all $a$ in $A$, then the sequence splits at $K_0(A)$ and $K_1(A)$, i.e., both $\nu_0$ and $\nu_1$ have a right inverse.
	\item If
    \begin{center}
\begin{tikzcd}
0 \arrow[r] & I \arrow[r, "\iota_A"] \arrow[d, "\psi"] & A \arrow[r, "\pi_A"] \arrow[d, "\phi"] & A/I \arrow[r] \arrow[d, "\gamma"] & 0 \\
0 \arrow[r] & J \arrow[r, "\iota_B"]                     & B \arrow[r, "\pi_B"]                   & B/J \arrow[r]                    & 0
\end{tikzcd}
\end{center}
is a commutative diagram with exact rows, then for each integer $n\geq1$, there is a natural connecting map $\partial_n:K_n(\gamma)\to K_{n-1}(\psi)$ such that the sequence\small
    \begin{center}
\begin{tikzcd}
\cdots \arrow[r, "\pi_*"] & K_2(\gamma) \arrow[r, "\partial_2"] & K_1(\psi) \arrow[r, "\iota_*"] & K_1(\phi) \arrow[r, "\pi_*"] & K_1(\gamma) \arrow[r, "\partial_1"] & K_0(\psi) \arrow[r, "\iota_*"] & K_0(\phi) \arrow[r, "\pi_*"] & K_0(\gamma)
\end{tikzcd}
    \end{center}\normalsize
    is exact.
    \item The theory satisfies Bott periodicity. Specifically, for each $^*$-homomorphism and each integer $n\geq0$, there is an isomorphism $\beta_\phi:K_n(\phi)\to K_{n+2}(\phi)$ that is natural in the sense that if the diagram (\ref{comm}) is commutative, then the diagram
    \begin{center}
\begin{tikzcd}
K_n(\phi) \arrow[dd, "\beta_\phi"] \arrow[rr, "{(\alpha,\beta)_*}"] &  & K_n(\psi) \arrow[dd, "\beta_\psi"] \\
                                                                    &  &                                    \\
K_{n+2}(\phi) \arrow[rr, "{(\alpha,\beta)_*}"]                 &  & K_{n+2}(\psi)                      
\end{tikzcd}
    \end{center}
    is commutative. It follows that the long exact sequence in part (ii) collapses to a six-term exact sequence
    \begin{center}
\begin{tikzcd}
K_0(\psi) \arrow[rr, "\iota_*"] &  & K_0(\phi) \arrow[rr, "\pi_*"] &  & K_0(\gamma) \arrow[dd, "\partial_0"]  \\
                       &  &                      &  &                        \\
K_1(\gamma) \arrow[uu, "\partial_1"]  &  & K_1(\phi) \arrow[ll, "\pi_*"] &  & K_1(\psi) \arrow[ll, "\iota_*"]
\end{tikzcd}
	\end{center}
	where the map $\partial_0:K_0(\gamma)\to K_1(\psi)$ is the composition $\partial_2\circ\beta_\gamma$.
	\item\label{test} If $\phi:A\to B$ is a $^*$-homomorphism of C$^*$-algebras $A$ and $B$, there are isomorphisms $\Delta_\phi:K_*(\phi)\to K_*(C_\phi)$ that are natural in the sense that if (\ref{comm}) is commutative, then the diagram
    \begin{center}
\begin{tikzcd}
K_j(\phi) \arrow[dd, "\Delta_\phi"] \arrow[rr, "{(\alpha,\beta)_*}"] &  & K_j(\psi) \arrow[dd, "\Delta_\psi"] \\
                                                        &  &                                  \\
K_j(C_\phi) \arrow[rr, "(\alpha\oplus C\beta)_*"]                      &  & K_j(C_\psi)                     
\end{tikzcd}
    \end{center}
    is commutative, where $C_\phi$ is the mapping cone of $\phi$.
\end{enumerate}
\end{theorem}

Regarding part (iv), the conclusion actually implies that the isomorphisms implement an invertible natural transformation between the functors $\phi\mapsto K_*(\phi)$ and $\phi\mapsto K_*(C_\phi)$ from the category \textbf{C$^*$-hom} to the category of abelian groups. When we speak of the transformation, we will denote it simply by $\Delta$.

We also collect some properties of the homology theory that are analogues of properties of C$^*$-algebra $K$-theory.

\begin{theorem}\label{axiom}
The homology theory $(K_n)_{n\geq0}$ on \textbf{C$^*$-hom} has the following properties.
\begin{enumerate}[(i)]
    \item Homotopy invariance: suppose that $\alpha_t:A\to C$ and $\beta_t:B\to D$ are $^*$-homomorphisms for every $0\leq t\leq1$, the maps $t\mapsto\alpha_t(a)$ and $t\mapsto\beta_t(b)$ are continuous for every $a$ in $A$ and every $b$ in $B$, and the diagram
    \begin{center}
        \begin{tikzcd}
A \arrow[r, "\alpha_t"] \arrow[d, "\phi"] &   C \arrow[d, "\psi"]              \\
B \arrow[r, "\beta_t"]              & D           
\end{tikzcd}
    \end{center}
    is commutative for every $0\leq t\leq1$. Then $(\alpha_0,\beta_0)_*=(\alpha_1,\beta_1)_*$.
    \item Stability: if $p$ is any rank one projection in $\K$ and $\kappa_A:A\to A\otimes\K$ is defined by $\kappa_A(a)=a\otimes p$ for every C$^*$-algebra $A$, then the morphism
    \begin{center}
\begin{tikzcd}
A \arrow[r, "\kappa_A"] \arrow[d, "\phi"] & A\otimes\K \arrow[d, "\phi\otimes\id_\K"] \\
B \arrow[r, "\kappa_B"]                   & B\otimes\K                               
\end{tikzcd}
    \end{center}
    induces an isomorphism \emph{$(\kappa_A,\kappa_B)_*:K_*(\phi)\to K_*(\phi\otimes\id_\K)$}.
    \item Continuity: suppose that $(A,\mu_i)$ is the inductive limit (in the category of C$^*$-algebras and $^*$-homomorphisms) of the inductive system $(A_i,\alpha_{ij})$, and $(B,\nu_i)$ is likewise the inductive limit of the inductive system $(B_i,\beta_{ij})$. Suppose also that, for each pair of indices $i\leq j$, there are $^*$-homomorphisms $\phi_i:A_i\to B_i$ and $\phi_j:A_j\to B_j$ such that the diagram
    \begin{center}
\begin{tikzcd}
A_i \arrow[r, "\alpha_{ij}"] \arrow[d, "\phi_i"] & A_j \arrow[d, "\phi_j"] \\
B_i \arrow[r, "\beta_{ij}"]                      & B_j                        
\end{tikzcd}
    \end{center}
    is commutative. Then there exists a $^*$-homomorphism $\phi:A\to B$ such that $(\phi,(\mu_i,\nu_i))$ is the inductive limit of $(\phi_i,(\alpha_{ij},\beta_{ij}))$ in the category \textbf{C$^*$-hom}, and $(K_*(\phi),(\mu_i,\nu_i)_*)$ is isomorphic to the inductive limit of $(K_*(\phi_i),(\alpha_{ij},\beta_{ij})_*)$ in the category of abelian groups.
    \end{enumerate}
    \end{theorem}
    
    We also collect some excision results that follow easily from Theorem \ref{main}. When $A$ is a C$^*$-subalgebra of $B$ and $\phi:A\to B$ is the inclusion map, we denote $K_*(\phi)$ by $K_*(A,B)$.    
    \begin{theorem}\label{exc} Let $\phi:A\to B$ be a $^*$-homomorphism of C$^*$-algebras $A$ and $B$.
    \begin{enumerate}[(i)]
	\item If $K_*(B)=0$, the maps $\nu_0:K_0(\phi)\to K_0(A)$ and $\nu_1:K_1(\phi)\to K_1(A)$ in part (i) of Theorem \ref{main} are isomorphisms.
        \item If $K_*(A)=0$, the maps $\mu_0:K_1(B)\to K_0(\phi)$ and $\mu_1:K_0(A)\to K_1(\phi)$ in part (i) of Theorem \ref{main} are isomorphisms.
	\item If $\phi$ is surjective, the morphism
	\begin{center}
\begin{tikzcd}
\ker\phi \arrow[d] \arrow[r, "\iota_\phi"] & A \arrow[d, "\phi"] \\
0 \arrow[r]                & B          
\end{tikzcd}
	\end{center}
	where $\iota_\phi:\ker\phi\to A$ is the inclusion map, induces a natural isomorphism $(\iota_\phi,0)_*:K_*(\ker\phi)\to K_*(\phi)$.
	\item If $I$ is a closed, two-sided ideal in the C$^*$-algebra $A$, then there are natural isomorphisms $K_0(A/I)\cong K_1(I,A)$ and $K_1(A/I)\cong K_0(I,A)$.
\end{enumerate}
\end{theorem}
	
	We now discuss some examples to illustrate the utility of parts (i) and (iii) of Theorem \ref{main}.

\begin{example}\label{diag}
Let $D$ be any C$^*$-algebra, and let $A$ be the subalgebra of $B=M_2(D)$ consisting of the diagonal matrices. Since $K_*(A)\cong K_*(D)\oplus K_*(D)$ and $K_*(B)\cong K_*(D)$, we may write the six-term exact sequence of part (i) of Theorem \ref{main} as

\begin{center}
\begin{tikzcd}
K_1(D) \arrow[rr] &  & K_0(A,B) \arrow[rr] &  & K_0(D)\oplus K_0(D) \arrow[dd, "\phi_*"] \\
             &  &                      &  &                                   \\
K_1(D)\oplus K_1(D) \arrow[uu, "\phi_*"] &  & K_1(A,B) \arrow[ll] &  & K_0(D) \arrow[ll]                   
\end{tikzcd}
\end{center}

The vertical maps are both $\phi_*(g,h)=g+h$. Exactness implies that $K_*(A,B)\cong\ker\phi_*\cong K_*(D)$.

As a special case of interest, let $\H$ be a separable Hilbert space of dimension at least $2$, and $\M$ a closed subspace such that $\M\neq\{0\}$ and $\M\neq\H$. Let $A=\K(\M)\oplus\K(\M^\perp)$, where $\K(\M)$ is the C$^*$-algebra of compact operators on $\M$, regarded as a subalgebra of $B=\K(\H)$ as operators that leave $\M$ and $\M^\perp$ invariant. Then $K_0(A,B)\cong\bZ$ and $K_1(A,B)=0$. If we fix a unit vector $\xi$ in $\M$, a unit vector $\eta$ in $\M^\perp$, and a partial isometry $v$ in $B$ with source subspace $\spn\{\xi\}$ and range subspace $\spn\{\eta\}$, the group $K_0(A,B)$ is generated by the class of the triple $(v^*v,vv^*,v)$.
\end{example}

\begin{example}\label{gamma}
Let $D$ be any C$^*$-algebra and consider $A=D$ as a subalgebra of $B=D\oplus D$ via the embedding $d\mapsto(d,d)$. The six-term exact sequence of part (i) of Theorem \ref{main} becomes

\begin{center}
\begin{tikzcd}
K_1(D)\oplus K_1(D) \arrow[rr] &  & K_0(A,B) \arrow[rr] &  & K_0(D) \arrow[dd, "\phi_*"] \\
             &  &                      &  &                                   \\
K_1(D) \arrow[uu, "\phi_*"] &  & K_1(A,B) \arrow[ll] &  & K_0(D)\oplus K_0(D) \arrow[ll]                   
\end{tikzcd}
\end{center}

This time the vertical maps are $\phi_*(g)=(g,g)$, which are injective, whence exactness implies $K_0(A,B)\cong K_1(D)$ and $K_1(A,B)\cong K_0(D)$. In the case that $D=\K$, the group $K_1(A,B)\cong\bZ$ is generated by the class of the triple $(p,p,g)$, where $p$ is a rank one projection in $\K$ and $g(s)=(e^{2\pi is}p,p)$. Observe that we do not need to consider the unit in the unitization $\tilde\K$ to describe the group $K_1(A,B)$.

\end{example}

\begin{example}
Consider the diagram
\begin{center}
\begin{tikzcd}
0 \arrow[r] & C_0(\bR^2) \arrow[r] \arrow[d, "\psi"] & C(\bD) \arrow[r] \arrow[d, "\phi"] & C(\bT) \arrow[r] \arrow[d, hook] & 0 \\
0 \arrow[r] & 0 \arrow[r]                    & {C([0,1])} \arrow[r, equal]                 & {C([0,1])} \arrow[r]         & 0
\end{tikzcd}
\end{center}
where $C_0(\bR^2)$ is identified with functions that vanish on the boundary of $\bD=\{(x,y)\in\bR^2\mid x^2+y^2\leq1\}$. The algebra $C(\bT)$ is viewed as all $f$ in $C([0,1])$ with $f(0)=f(1)$, and $\phi$ is the composition of the restriction to the boundary $C(\bD)\to C(\bT)$ with the inclusion $C(\bT)\hookrightarrow C([0,1])$. We have $K_0(\phi)=K_1(\phi)=0$ since $K_1(C([0,1]))=K_1(C(\bD))=0$ (see Corollary \ref{zero}) and the induced map $\phi_*:K_0(C(\bD))\to K_0(C([0,1]))$ is an isomorphism. The six-term exact sequence of part (iii) of Theorem \ref{main} becomes
\begin{center}
\begin{tikzcd}
K_0(C_0(\bR^2)) \arrow[rr] &  & 0 \arrow[rr] &  & K_0(C(\bT),C([0,1])) \arrow[dd, "\partial_0"]  \\
                       &  &                      &  &                        \\
K_1(C(\bT),C([0,1])) \arrow[uu, "\partial_1"]  &  & 0 \arrow[ll] &  & K_1(C_0(\bR^2)) \arrow[ll]
\end{tikzcd}
	\end{center}
(we identify $K_*(\psi)$ with $K_*(C_0(\bR^2))$ using Theorem \ref{exc}). It can be shown that $K_1(C(\bT),C([0,1]))\cong\bZ$ is generated by the class of $(1,z,g)$ where $z$ is the function $z\mapsto z$ on $\bT$ and $g(t)=f_t$, where $f_t(s)=e^{2\pi ist}$. Using the notation in Definition \ref{index}, let $l=1$,
\[w=\left[\begin{array}{cc}
    z & -(1-|z|^2)^{1/2} \\
    (1-|z|^2)^{1/2} & \ol z
\end{array}\right]\]
and $h=g$. Then
\[\partial_1([1,z,g])=\left[\left[\begin{array}{cc}
    |z|^2 & z(1-|z|^2)^{1/2} \\
    \ol z(1-|z|^2)^{1/2} & 1-|z|^2
\end{array}\right]\right]-\left[\left[\begin{array}{cc}
    1 & 0 \\
    0 & 0
\end{array}\right]\right]\]
\end{example}

\begin{example}
Let $D$ be the diagonal matrices in $M_2(\bC)$. Consider the diagram
\begin{center}
\begin{tikzcd}
0 \arrow[r] & C_0(\bR) \arrow[r] \arrow[d] & C([0,1]) \arrow[r, "\pi"] \arrow[d, "\phi"] & D \arrow[r] \arrow[d, hook] & 0 \\
0 \arrow[r] & 0 \arrow[r]                    & M_2(\bC) \arrow[r, equal]                 & M_2(\bC) \arrow[r]         & 0
\end{tikzcd}
\end{center}
Where $C_0(\bR)$ is identified with all functions in $C([0,1])$ that vanish at the endpoints, the map $\pi:C([0,1])\to D$ is defined by
\[\pi(f)=\left[\begin{array}{cc}
    f(0) & 0 \\
    0 & f(1)
\end{array}\right],\]
and $\phi$ is the composition of $\pi$ with the inclusion $D\hookrightarrow M_2(\bC)$. We have $K_0(C_0(\bR))=0$ and $K_1(D,M_2(\bC))=0$, the latter by Example \ref{diag}. The six-term exact sequence of part (iii) of Theorem \ref{main} becomes
\begin{center}
\begin{tikzcd}
0 \arrow[rr] &  & K_0(\phi) \arrow[rr] &  & K_0(D,M_2(\bC)) \arrow[dd, "\partial_0"]  \\
                      &  &                      &  &                        \\
0 \arrow[uu]  &  & K_1(\phi) \arrow[ll] &  & K_1(C_0(\bR)) \arrow[ll]
\end{tikzcd}
	\end{center}
Using part (i) of Theorem \ref{main}, it can be shown that $K_0(\phi)=0$ and $K_1(\phi)\cong\bZ/2\bZ$, with the nontrivial element in $K_1(\phi)$ given by the class of the triple $(1,1,g)$, where
\[g(s)=\left[\begin{array}{cc}
    e^{2\pi is} & 0 \\
    0 & 1
\end{array}\right]\]
The map $\partial_0$ is therefore injective and takes a generator of $K_0(D,M_2(\bC))\cong\bZ$ to twice a generator of $K_1(C_0(\bR))\cong\bZ$. More concretely,
\[\partial_0\left(\left[\left[\begin{array}{cc}
    0 & 0 \\
    0 & 1
\end{array}\right],\left[\begin{array}{cc}
    1 & 0 \\
    0 & 0
\end{array}\right],\left[\begin{array}{cc}
    0 & 1 \\
    0 & 0
\end{array}\right]\right]\right)=-[z^2]\]
where $z$ is the function $z\mapsto z$ on $C(\bT)\cong C_0(\bR)^\sim$.
\end{example}

The final example illustrates the excision theorem of \cite{putnam3}. We refer to \cite{groupoid} for a detailed account of constructing a C$^*$-algebra $C_r^*(R)$ from an equivalence relation $R$, or, more generally, from an \'etale groupoid.

\begin{example}
Let $X=\{0,1\}^\bN$, the space of all sequences of $0$'s and $1$'s with the product topology. Define the surjective map $\omega:X\to\bT$ by
\[\omega(\{x_n\})=\exp\left(2\pi i\sum_{n=1}^\infty x_n2^{-n}\right).\]
Define $S\subseteq X\times X$ to be \emph{tail-equivalence} on $X$, that is,
\[(\{x_n\},\{y_n\})\in S\Longleftrightarrow x_n=y_n\text{ for sufficiently large }n.\]
The equivalence relation $S$ has a natural topology under which it is a Hausdorff, \'etale groupoid, and $C_r^*(S)$ is isomorphic to the UHF algebra $M_{2^\infty}$. Define $T=\omega\times\omega(S)$; it is a consequence of the main result of \cite{haslehurst} that $T$, with the quotient topology from $S$ and the map $\omega\times\omega|_S$, is a Hausdorff, \'etale groupoid. It has the following concrete description, as can be easily checked:
\[\textstyle T=\{(w,z)\in\bT\times\bT\mid w=e^{2\pi i\theta}z\text{ for some }\theta\in\bZ[\frac12]\},\]
where $\bZ\left[\frac12\right]=\left\{\frac{k}{2^n}\mid k\in\bZ\AND n\geq0\right\}$. Moreoever, the map $\omega\times\omega$ satisfies the standing hypotheses of section 7 of \cite{putnam3}, and thus induces an injective $^*$-homomorphism from $C_r^*(T)$ to $C_r^*(S)$, enabling us to regard $C_r^*(T)$ as a C$^*$-subalgebra of $C_r^*(S)$.

Let $\bT_D=\{e^{2\pi i\theta}\mid\theta\in\bZ[\frac12]\}$. Notice that $\bT_D$ is exactly where the map $\omega$ is not one-to-one. The main result from section 7 of \cite{putnam3} says (avoiding some technical details) that the relative $K$-theory of the inclusion $C_r^*(T)\subseteq C_r^*(S)$ can be computed by looking only where the map $\omega\times\omega$ is not one-to-one. More precisely,
\[K_*(C_r^*(T),C_r^*(S))\cong K_*(\K(l^2(\bT_D)),\K(l^2(\bT_D))\oplus\K(l^2(\bT_D))).\]
Where we regard $\K(l^2(\bT_D))$ as a C$^*$-subalgebra of $\K(l^2(\bT_D))\oplus\K(l^2(\bT_D))$ as in Example \ref{gamma}. The six-term exact sequence of part (i) of Theorem \ref{main} becomes
\begin{center}
\begin{tikzcd}
0 \arrow[rr] &  & K_0(C_r^*(T),C_r^*(S)) \arrow[rr] &  & K_0(C_r^*(T)) \arrow[dd]  \\
                      &  &                      &  &                        \\
K_1(C_r^*(T)) \arrow[uu]  &  & K_1(C_r^*(T),C_r^*(S)) \arrow[ll] &  & K_0(C_r^*(S)) \arrow[ll]
\end{tikzcd}
	\end{center}
Example \ref{gamma} allows us to conclude that $K_0(C_r^*(T),C_r^*(S))=0$ and $K_1(C_r^*(T),C_r^*(S))\cong\bZ$. It may also be shown that the right vertical map is surjective, see Lemma 5.5 of \cite{haslehurst}. We thus obtain that $K_0(C_r^*(T))\cong\bZ[\frac12]$ (with the order inherited from $\bR$) and $K_1(C_r^*(T))\cong\bZ$.
\end{example}

The previous example illustrates the simplest case of a general construction involving Bratteli diagrams; we refer to section 3 of \cite{haslehurst} for more information.

\section{Definitions and a portrait of $K_*(\phi)$}

We begin by establishing some notation and terminology. If $A$ is a C$^*$-algebra, we let $\tilde A$ denote its unitization. If $a$ is in $\tilde A$, let $\dot a$ denote the scalar part of $a$. Let $M_n(A)$ denote the $n\times n$ matrices with entries in $A$, regarded as a C$^*$-algebra in the usual way. Let $M_\infty(\tilde A)$ be the union $\bigcup_{n=1}^\infty M_n(\tilde A)$, which may be regarded as an increasing union by means of the inclusions $M_n(\tilde A)\subseteq M_{n+1}(\tilde A)$, $a\mapsto\diag(a,0)$. If $a$ and $b$ are in $M_\infty(\tilde A)$ we define
	\[a\oplus b=\left[\begin{array}{cc}
	    a & 0 \\
	    0 & b
	\end{array}\right]\]
	Admittedly, there is some ambiguity in the above definition of $a\oplus b$ since $a$ and $b$ may be regarded as matrices of arbitrarily large size. However, since $K$-theory doesn't distinguish elements that are ``moved down the diagonal", there will be negligible harm done by ignoring this technical issue. We denote by $1_n$ the identity matrix in $M_n(\tilde A)$, or the matrix in $M_\infty(\tilde A)$ with $n$ consecutive occurences of $1$ down the diagonal, and $0$ elsewhere.
	
	An element $p$ of a C$^*$-algebra is called a \textit{projection} if $p=p^2=p^*$. An element $u$ of a unital C$^*$-algebra is called a \textit{unitary} if $u^*u=uu^*=1$. An element $v$ of a C$^*$-algebra is called a \textit{partial isometry} if $v^*v$ is a projection (in which case, $vv^*$ is also a projection). Two projections $p$ and $q$ in $M_\infty(\tilde A)$ are called \emph{Murray-von Neumann equivalent} if there is a partial isometry $v$ in $M_\infty(\tilde A)$ such that $v^*v=p$ and $vv^*=q$, and we will say in this situation that $v$ \textit{is a partial isometry from $p$ to $q$}. It is straightforward to check the useful formulas $v=qv=vp=qvp$.
	
	The group $K_0(\tilde A)$ is the Grothendieck completion of the semigroup of Murray-von Neumann classes of projections in $M_\infty(\tilde A)$ with the operation $[p]+[q]=[p\oplus q]$. The group $K_0(A)$ is the kernel of the map $K_0(\tilde A)\to\bZ$ induced by the scalar map $\tilde A\to\bC$. The group $K_1(A)$ is the group of stable homotopy classes of unitaries over $\tilde A$ with the operation $[u]+[v]=[uv]$ (regard $u$ and $v$ as elements of the same matrix algebra so that the product is well-defined). Every element of $K_0(A)$ may be represented by a formal difference $[p]-[q]$ of classes such that $\dot p=1_n$ and $q=1_n$ for some $n$, see the discussion following 5.5.1 in \cite{blackadar}. Every element of $K_1(A)$ may be represented by a class $[u]$ such that $u$ is in $M_n(\tilde A)$ and $\dot u=1_n$.
	
	We denote the \textit{compact operators} on a separable Hilbert space by $\K$. We write $SA$ for $C_0((0,1))\otimes A=C_0((0,1),A)$, the \emph{suspension} of $A$, and $CA$ for $C_0((0,1])\otimes A=C_0((0,1],A)$, the \textit{cone} of $A$. If $\phi:A\to B$ is a $^*$-homomorphism, we commit the usual notation abuse and denote the obvious induced maps $\tilde A\to\tilde B$, $SA\to SB$, $CA\to CB$, $M_n(A)\to M_n(B)$ (or any combination of these) by $\phi$. Clarity will sometimes be needed for the first three, in which case they will be denoted by $\tilde\phi$, $S\phi$, and $C\phi$ respectively. The \emph{mapping cone} $C_\phi$ of $\phi$ is defined to be the pullback of $\phi$ and the  map $\pi_B:CB\to B$, $\pi_B(f)=f(1)$. In other words, it is the C$^*$-algebra
    \[C_\phi=\{(a,f)\mid f(1)=\phi(a)\}\subseteq A\oplus CB.\]
    We denote the induced maps $K_j(A)\to K_j(B)$ by $\phi_*$ for both $j=0,1$. We denote the natural isomorphism $K_1(A)\to K_0(SA)$ by $\theta_A$ and the Bott map $K_0(A)\to K_1(SA)$ by $\beta_A$. If
	\begin{center}
\begin{tikzcd}
0 \arrow[r] & I \arrow[r, "\iota"] & A \arrow[r, "\pi"] & A/I \arrow[r] & 0
\end{tikzcd}
	\end{center}
	is a short exact sequence of C$^*$-algebras, we denote the index maps $K_n(A/I)\to K_{n-1}(I)$ by $\delta_n$ for $n\geq1$ and the exponential map $K_0(A/I)\to K_1(I)$ by $\delta_0$. We refer the reader to \cite{blackadar} or \cite{rordam} for more details on $K$-theory for C$^*$-algebras.
	
	\begin{definition}\label{big}
Define $\Gamma_0(\phi)$ to be the set of all triples $(p,q,v)$ where $p$ and $q$ are projections in $M_\infty(\tilde A)$ and $v$ is a partial isometry in $M_\infty(\tilde B)$ from $\phi(p)$ to $\phi(q)$ (recall this means that $v^*v=\phi(p)$ and $vv^*=\phi(q)$). For brevity, we will often denote these triples by the symbols $\sigma$ and $\tau$.
\begin{enumerate}[(i)]
    \item Define the direct sum operation $\oplus$ on $\Gamma_0(\phi)$ by
	\[(p,q,v)\oplus(p',q',v')=(p\oplus p',q\oplus q',v\oplus v').\]
	\item We say that two triples $(p,q,v)$ and $(p',q',v')$ are \emph{isomorphic}, written $(p,q,v)\cong(p',q',v')$, if there exist partial isometries $c$ and $d$ from $p$ to $p'$ and from $q$ to $q'$, respectively, that intertwine $v$ and $v'$, that is, $\phi(d)v=v'\phi(c)$.
 \item A triple $(p,q,v)$ is called \emph{elementary} if $p=q$ and there is a homotopy $v_t$ for $0\leq t\leq1$ such that $v_0=\phi(p)$, $v_1=v$, and $v_t^*v_t=v_tv_t^*=\phi(p)$ for all $t$.
	\item Two triples $\sigma$ and $\sigma'$ in $\Gamma_0(\phi)$ are \emph{equivalent}, written $\sigma\sim\sigma'$, if there exist elementary triples $\tau$ and $\tau'$ such that $\sigma\oplus\tau\cong\sigma'\oplus\tau'$.
\end{enumerate}
	   Denote by $[\sigma]$, or $[p,q,v]$, the equivalence class of the triple $\sigma=(p,q,v)$ via the relation $\sim$. $K_0(\phi)$ is then defined to be the quotient of $\Gamma_0(\phi)$ by the relation $\sim$, that is,
	\[\{[\sigma]\mid\sigma\in\Gamma_0(\phi)\}=\Gamma_0(\phi)/\sim\]
	\end{definition}
	
    We make two simple observations. First, the notions of isomorphism and elementary for triples behave well with respect to the direct sum operation: if $\sigma_1\cong\sigma_2$ and $\sigma_3\cong\sigma_4$, then $\sigma_1\oplus\sigma_3\cong\sigma_2\oplus\sigma_4$, for any two triples $\sigma$ and $\sigma'$, we have $\sigma\oplus\sigma'\cong\sigma'\oplus\sigma$, and if $\sigma$ and $\sigma'$ are elementary, then so is $\sigma\oplus\sigma'$. Second, all elementary triples are equivalent to each other, and two isomorphic triples are equivalent.
    
    We recall the following useful fact: if $u$ is a self-adjoint unitary in a unital C$^*$-algebra $A$, then $u=e^{i\pi(1-u)/2}$. To see this, note that $(1-u)^2=2(1-u)$, so $(1-u)^n=2^{n-1}(1-u)$ by induction, and hence
    \[e^{i\pi(1-u)/2}=\sum_{n=0}^\infty\frac{(i\pi(1-u)/2)^n}{n!}=1+\sum_{n=1}^\infty\frac12\frac{(i\pi)^n}{n!}(1-u)=1+\frac12(e^{i\pi}-1)(1-u)=u\]
    it follows that $u$ is homotopic to $1$ via the path $e^{i\pi t(1-u)/2}$ for $0\leq t\leq1$.
	   
	\begin{prop}\label{group} $K_0(\phi)$ is an abelian group when equipped with the binary operation
	\[[\sigma]+[\sigma']=[\sigma\oplus\sigma']\]
	where the identity element is given by $[0,0,0]$, and the inverse of $[p,q,v]$ is given by $[q,p,v^*]$.
	\end{prop}
	
	\begin{proof} That $K_0(\phi)$ is an abelian group follows quite readily from the observations above, and the fact that $[0,0,0]$ is the identity element is all but trivial since we identify $a$ with $a\oplus0$ in $M_\infty(\tilde A)$. To prove the last statement, note that
	\[[p,q,v]+[q,p,v^*]=[p\oplus q,q\oplus p,v\oplus v^*]\]
	and the triple $(p\oplus q,q\oplus p,v\oplus v^*)$ is isomorphic to the triple
	\begin{equation}\label{slfadjunt}
	\left(\left[\begin{array}{cc}
	   p & 0 \\
	    0 & q
	\end{array}\right],\left[\begin{array}{cc}
	    p & 0 \\
	    0 & q
	\end{array}\right],\left[\begin{array}{cc}
	    0 & v^* \\
	    v & 0
	\end{array}\right]\right)
	\end{equation}
	by taking
	\[d=\left[\begin{array}{cc}
	   0 & p \\
	    q & 0
	\end{array}\right],\qquad c=\left[\begin{array}{cc}
	   p & 0 \\
	    0 & q
	\end{array}\right]\]
	in part (ii) of Definition \ref{big}. The partial isometry in (\ref{slfadjunt}) is a self-adjoint unitary in the C$^*$-algebra $(\phi(p)\oplus\phi(q))M_\infty(\tilde B)(\phi(p)\oplus\phi(q))$, and is thus homotopic to the identity $\phi(p)\oplus\phi(q)$. Therefore, (\ref{slfadjunt}) is elementary, its class is zero, and hence the class of $(p\oplus q,q\oplus p,v\oplus v^*)$ is zero because it is isomorphic to (\ref{slfadjunt}).
	\end{proof}
	
	We collect some useful properties of the elements of $K_0(\phi)$.
	   
	   \begin{prop}\label{3}
	   \begin{enumerate}[(i)]
	       \item Suppose that $p$ and $q$ are projections in $M_\infty(\tilde A)$ and $v$ and $v'$ are two partial isometries in $M_\infty(\tilde B)$ from $\phi(p)$ to $\phi(q)$. If there is a continuous path $v_t$ of partial isometries such that $v_0=v$, $v_1=v'$, and $v_t^*v_t=\phi(p)$ and $v_tv_t^*=\phi(q)$ for all $0\leq t\leq1$, then $[p,q,v]=[p,q,v']$.
	       \item Suppose that $p$, $q$, and $r$ are projections in $M_\infty(\tilde A)$ and $v$ and $w$ are partial isometries in $M_\infty(\tilde B)$ from $\phi(p)$ to $\phi(q)$ and from $\phi(q)$ to $\phi(r)$, respectively. Then
	       \[[p,q,v]+[q,r,w]=[p,r,wv].\]
	   \item Let $(p,q,v)$ and $(p',q',v')$ be two triples in $\Gamma_0(\phi)$. If $pp'=0$, then
	\[(p,q,v)\oplus(p',q',v')\cong\left(p+p',q\oplus q',\left[\begin{array}{cc}
	     v & 0\\
	     v' & 0
	\end{array}\right]\right)\]
	If $qq'=0$, then
	\[(p,q,v)\oplus(p',q',v')\cong\left(p\oplus p',q+q',\left[\begin{array}{cc}
	     v & v' \\
	     0 & 0
	\end{array}\right]\right)\]
	If $pp'=qq'=0$, then
	\[(p,q,v)\oplus(p',q',v')\cong(p+p',q+q',v+v')\]
	   \item Every triple in $\Gamma_0(\phi)$ is equivalent to one of the form $(p,1_n,v)$, where $n\geq1$ and $\dot p=\dot v=1_n$, and one of the form $(1_n,q,v)$, where $n\geq1$ and $\dot q=\dot v=1_n$.
	   \item $[p,q,v]=0$ if and only if there exist projections $r$ and $s$ in $M_\infty(\tilde A)$ and partial isometries $x$ and $y$ in $M_\infty(\tilde A)$ from $p\oplus r$ to $s$ and $q\oplus r$ to $s$, respectively, such that $\phi(y)(v\oplus\phi(r))\phi(x^*)$ and $\phi(s)$ are homotopic as unitaries through $\phi(s)M_\infty(\tilde B)\phi(s)$.
	   
	   Suppose $m\geq n$, $p$ is in $M_m(\tilde A)$, and $(p,1_n,v)$ is a triple in $\Gamma_0(\phi)$ with $\dot p=\dot v=1_n$. Then $[p,1_n,v]=0$ if and only if there exist $k\geq0$ and a partial isometry $w$ in $M_{m+k}(\tilde A)$ with $w^*w=\dot w=1_n\oplus0_{m-n}\oplus1_k$ and $ww^*=p\oplus1_k$ such that $(v\oplus1_k)\phi(w)$ and $1_n\oplus0_{m-n}\oplus1_k$ are homotopic as unitaries in $(1_n\oplus0_{m-n}\oplus1_k)M_{m+k}(\tilde B)(1_n\oplus0_{m-n}\oplus1_k)$.
	\end{enumerate}
	   \end{prop}
	   
	   \begin{proof} \begin{enumerate}[(i)]
	      \item We have
	   \[[p,q,v]-[p,q,v']=[p,q,v]+[q,p,v'^*]=[p\oplus q,q\oplus p,v\oplus v'^*]\]
	  and the triple $(p\oplus q,q\oplus p,v\oplus v'^*)$ is isomorphic to
	  \[\left(\left[\begin{array}{cc}
	   p & 0 \\
	   0 & q
	\end{array}\right],\left[\begin{array}{cc}
	    p & 0 \\
	    0 & q
	\end{array}\right],\left[\begin{array}{cc}
	    0 & v'^* \\
	    v & 0
	\end{array}\right]\right)\]
	similarly as in the proof of Proposition \ref{group}. The partial isometry in the above triple is homotopic to a self-adjoint unitary in $(\phi(p)\oplus\phi(q))M_\infty(\tilde B)(\phi(p)\oplus\phi(q))$ via the path
	\[\left[\begin{array}{cc}
	    0 & v_t^* \\
	    v & 0
	\end{array}\right].\]
	It follows that the triple is elementary.
	\item We have
	   \[[p,q,v]+[q,r,w]=[p\oplus q,q\oplus r,v\oplus w]\]
	   and observe that the triple $(p\oplus q,q\oplus r,v\oplus w)$ is isomorphic to the triple
	   \[\left(\left[\begin{array}{cc}
	   p & 0 \\
	    0 & q
	\end{array}\right],\left[\begin{array}{cc}
	    r & 0 \\
	    0 & q
	\end{array}\right],\left[\begin{array}{cc}
	    0 & w \\
	    v & 0
	\end{array}\right]\right).\]
    by taking
	\[d=\left[\begin{array}{cc}
	   0 & r \\
	    q & 0
	\end{array}\right],\qquad c=\left[\begin{array}{cc}
	   p & 0 \\
	    0 & q
	\end{array}\right]\]
	in part (ii) of Definition \ref{big}. We also have that $[p,r,wv]=[p\oplus q,r\oplus q,wv\oplus\phi(q)]$ since $(q,q,\phi(q))$ is elementary. Now
	\[\left[\begin{array}{cc}
	    0 & w \\
	    v & 0
	\end{array}\right]=\left[\begin{array}{cc}
	    0 & w \\
	    w^* & 0
	\end{array}\right]\left[\begin{array}{cc}
	    wv & 0 \\
	    0 & \phi(q)
	\end{array}\right]\]
	and the second matrix above is homotopic to $\phi(r)\oplus\phi(q)$. It follows that the two matrices
	\[\left[\begin{array}{cc}
	    0 & w \\
	    v & 0
	\end{array}\right]\qquad\left[\begin{array}{cc}
	    wv & 0 \\
	    0 & \phi(q)
	\end{array}\right]\]
	are homotopic, and hence, by part (i), the two triples
	\[\left(\left[\begin{array}{cc}
	   p & 0 \\
	    0 & q
	\end{array}\right],\left[\begin{array}{cc}
	    r & 0 \\
	    0 & q
	\end{array}\right],\left[\begin{array}{cc}
	    0 & w \\
	    v & 0
	\end{array}\right]\right),\qquad\left(\left[\begin{array}{cc}
	   p & 0 \\
	    0 & q
	\end{array}\right],\left[\begin{array}{cc}
	    r & 0 \\
	    0 & q
	\end{array}\right],\left[\begin{array}{cc}
	    wv & 0 \\
	    v & \phi(q)
	\end{array}\right]\right)\]
	are equivalent.
	\item If $pp'=0$, then $v\phi(p')=v\phi(p)\phi(p')=0$, and similarly $v'\phi(p)=0$. Thus we have \[\left[\begin{array}{cc}
	    \phi(q) & 0 \\
	    0 & \phi(q')
	\end{array}\right]\left[\begin{array}{cc}
	    v & 0 \\
	    0 & v'
	\end{array}\right]=\left[\begin{array}{cc}
	    v & 0 \\
	    v' & 0
	\end{array}\right]\left[\begin{array}{cc}
	    \phi(p) & \phi(p') \\
	    0 & 0
	\end{array}\right]\]
	so the triples are isomorphic. The other two claims are similar.
	\item
	We show only that any triple is equivalent to a triple of the second form $(1_n,q,v)$, where $n\geq1$ and $\dot q=\dot v=1_n$, since the proof is analogous for the first form. Take any triple $(p,q,v)$, and choose $n\geq1$ so that $p$ is in $M_n(\tilde A)$. By adding the elementary triple $(1_n-p,1_n-p,1_n-\phi(p))$ and using part (iii),
	\[(p,q,v)\sim\left(\left[\begin{array}{cc}
	     1_n & 0 \\
	     0 & 0
	\end{array}\right],\left[\begin{array}{cc}
	     q & 0 \\
	     0 & 1_n-p
	\end{array}\right],\left[\begin{array}{cc}
	     v & 0 \\
	     1_n-\phi(p) & 0
	\end{array}\right]\right).\]
	Set $q_1=q\oplus(1_n-p)$ and $v_1=\left[\begin{array}{cc}
	     v & 0 \\
	     1_n-\phi(p) & 0
	\end{array}\right]$. We have $\dot v_1^*\dot v_1=1_n$ and $\dot v_1\dot v_1^*=\dot q_1$, so choose $m\geq n$ and a unitary $u$ in $M_m(\bC)$ such that $u\dot q_1u^*=1_n$. Then $(1_n,q_1,v_1)\cong(1_n,uq_1u^*,uv_1)$ since $uv_1\phi(1_n)=\phi(uq_1)v_1$. The scalar part of $uq_1u^*$ is $1_n$ and we now set $q_2=uq_1u^*$ and $v_2=uv_1$. Lastly, $\dot v_2$ may then be regarded as a unitary in $M_n(\bC)$, so choose a homotopy $v_t$ from $\dot v_2$ to $1_n$ and observe that each $v_2v_t^*$ is a partial isometry from $1_n$ to $\phi(q_2)$. By part (i), $(1_n,q_2,v_2)\sim(1_n,q_2,v_2\dot v_2^*)$. Setting $v_3=v_2\dot v_2^*$, the triple $(1_n,q_2,v_3)$ has the desired properties.
	   \item For the first part, it is a direct consequence of the definitions that $[p,q,v]=0$ if and only if there are elementary triples $(r,r,c)$ and $(s,s,d)$ such that
	\[(p,q,v)\oplus(r,r,c)\cong(s,s,d)\]
	This is true if and only if there are partial isometries $x$ and $y$ in $M_\infty(\tilde A)$ such $d\phi(x)=\phi(y)(v\oplus c)$. Then $d=\phi(y)(v\oplus c)\phi(x^*)$, and since $d$ is homotopic to $\phi(s)$ and $c$ is homotopic to $\phi(r)$, we have the conclusion.
	
	For the second part, for appropriate $m\geq n$, obtain elementary triples $(r,r,c)$ and $(s,s,d)$ such that
	\[(p,1_n\oplus0_{m-n},v)\oplus(r,r,c)\cong(s,s,d).\]
	By replacing $s$ with $s\oplus(1_k-r)$ and $d$ by $d\oplus(1_k-\phi(r))$, and using part (iii), we may assume that $r=1_k$ for some $k\geq0$. Obtain $x$ and $y$ as in the previous paragraph, so that $d\phi(x)=\phi(y)(v\oplus c)$, hence $\phi(y^*)d\phi(y)=(v\oplus c)\phi(x^*y)$. Since $d$ is homotopic to $\phi(s)$, $\phi(y^*)d\phi(y)$ is homotopic to $1_n\oplus0_{m-n}\oplus1_k$. Also, $c$ is homotopic to $1_k$. Therefore, $w=x^*y\dot y^*\dot x$ has the desired properties.\qedhere
	   \end{enumerate}
	   \end{proof}
	   
	   In II.3.3 of \cite{karoubi}, Karoubi introduces a definition of the $K_1$-group, there denoted $K^{-1}(\C)$ for a Banach category $\C$, that gives an equivalent but slightly more general description. We provide the definition in order to motivate the definition of the relative $K_1$-group. Consider the set $\Gamma_1(A)$ of all pairs $(p,u)$ such that $p$ is a projection in $M_\infty(\tilde A)$ and $u$ is a unitary in $pM_\infty(\tilde A)p$. Define the direct sum $(p,u)\oplus(p',u')=(p\oplus p',u\oplus u')$, as usual. Say that two pairs $(p,u)$ and $(p',u')$ are isomorphic, written $(p,u)\cong(p',u')$, if there is a partial isometry $v$ from $p$ to $p'$ such that $vu=u'v$. We say a pair $(p,u)$ is elementary if there is a continuous path of unitaries $u_t$ from $u$ to $p$ through $pM_\infty(\tilde A)p$. We say that two pairs $\sigma$ and $\sigma'$ in $\Gamma_1(A)$ are equivalent, written $\sigma\sim\sigma'$, if there exist elementary pairs $\tau$ and $\tau'$ such that $\sigma\oplus\tau\cong\sigma\oplus\tau'$. Denote by $[\sigma]$, or $[p,u]$, the equivalence class of the pair $\sigma=(p,u)$ via the relation $\sim$. $K^{-1}(\C_A)$ is defined to be the quotient of $\Gamma_1(A)$ by the relation $\sim$. It is an abelian group with $[0,0]=0$ and $-[p,u]=[p,u^*]$.
	   
	   The proof of the following result uses similar, but simpler, techniques to those in Proposition \ref{3}. For this reason, and because we will not need it, we omit the proof.
	
	\begin{prop}\label{k1iso}
	The map $\Omega_A:K_1(A)\to K^{-1}(\C_A)$ defined by $\Omega_A([u])=[1_n,u]$ (for $n\geq1$ and a unitary $u$ in $M_n(\tilde A)$) is a natural isomorphism.
	\end{prop}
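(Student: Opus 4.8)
The plan is to produce an explicit inverse to $\Omega_A$ and verify that the two maps are mutually inverse group homomorphisms, and then check naturality. Throughout I would use three elementary facts about $K^{-1}(\C_A)$, all exact analogues of Proposition \ref{3} proved by the same block-matrix manipulations but in the simpler situation where the source and target idempotents coincide. First, \emph{homotopy invariance}: if $a$ and $a'$ are homotopic through invertible morphisms of $eM_\infty(\tilde A)e$, then $[e,a]=[e,a']$; indeed $[e,a]-[e,a']=[e\oplus e,a\oplus a'^{-1}]$, and $a\oplus a'^{-1}$ is homotopic to $a'\oplus a'^{-1}$, which is homotopic to $e\oplus e$ by the Whitehead rotation, so that pair is elementary. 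Second, \emph{orthogonality}: if $ee'=e'e=0$ then $(e,a)\oplus(e',a')\cong(e+e',a+a')$, via the invertible morphism $[\,e\ \ e'\,]$ from $e\oplus e'$ to $e+e'$. Third, the \emph{composition law} $[e,a]+[e,a']=[e,aa']$, again from the Whitehead rotation identifying $a\oplus a'$ with $aa'\oplus e$ up to homotopy. I would also use repeatedly that adjoining an elementary pair leaves a class unchanged, which is immediate from the definition of the relation.

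With these tools, $\Omega_A$ is well defined: if $u$ and $v$ are stably homotopic unitaries over $\tilde A$, then after enlarging to a common matrix size they become homotopic through invertibles in some $M_N(\tilde A)=1_NM_\infty(\tilde A)1_N$, so homotopy invariance together with deletion of the elementary pairs $(1_{N-n},1_{N-n})$ gives $[1_n,u]=[1_N,u\oplus 1]=[1_N,v\oplus 1]=[1_m,v]$. The composition law gives $\Omega_A([u])+\Omega_A([v])=[1_n,u]+[1_n,v]=[1_n,uv]=\Omega_A([uv])$, so $\Omega_A$ is a homomorphism. Surjectivity is the key structural observation: for a pair $(e,a)$ with $e\in M_n(\tilde A)$, the idempotents $e$ and $1_n-e$ are orthogonal and $(1_n-e,1_n-e)$ is elementary, so by orthogonality $[e,a]=[e,a]+[1_n-e,1_n-e]=[1_n,a+(1_n-e)]$. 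As $a+(1_n-e)$ is invertible in $M_n(\tilde A)$, this exhibits $[e,a]$ as $\Omega_A$ of its $K_1$-class (extending $\Omega_A$ to invertibles by polar decomposition, which is harmless by homotopy invariance).

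This same identity motivates the inverse $\Psi\colon K^{-1}(\C_A)\to K_1(A)$, $\Psi([e,a])=[a+(1_n-e)]$ for $n$ with $e\in M_n(\tilde A)$. One checks that $\Psi$ is insensitive to stabilization (enlarging $n$ appends a $1$), sends elementary pairs to $0$ (if $a$ is homotopic to $e$ then $a+(1_n-e)$ is homotopic to $1_n$), and converts direct sums into products, so it descends to the equivalence relation once invariance under isomorphism is established. That invariance is the main obstacle: given $(e,a)\cong(e',a')$ via an invertible morphism $b$ with $ba=a'b$, I must show $[a+(1_n-e)]=[a'+(1_n-e')]$. The difficulty is that $b$ is only an invertible \emph{morphism} $e\to e'$, not an invertible element; the device is to complete it, after one stabilization, to a genuine invertible $T\in M_{2n}(\tilde A)$ — for instance $T=\left[\begin{smallmatrix} 1-e' & b\\ b^{-1} & 1-e\end{smallmatrix}\right]$, which satisfies $T^2=1_{2n}$ — and then to verify that conjugation by $T$ carries $(a+(1_n-e))\oplus 1_n$ to an invertible homotopic to $(a'+(1_n-e'))\oplus 1_n$, using $ba=a'b$ together with $b(1_n-e)=(1_n-e')b=0$. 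This is where the bulk of the computation lies. Granting well-definedness, $\Psi\Omega_A([u])=[u+(1_n-1_n)]=[u]$ and, by the surjectivity identity, $\Omega_A\Psi([e,a])=[1_n,a+(1_n-e)]=[e,a]$, so $\Omega_A$ and $\Psi$ are mutually inverse and $\Omega_A$ is an isomorphism.

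Finally, naturality is immediate from the explicit formula. For a $^*$-homomorphism $\rho\colon A\to A'$ the induced Banach functor $\C_A\to\C_{A'}$ sends $[e,a]$ to $[\rho(e),\rho(a)]$, and in particular $[1_n,u]$ to $[1_n,\rho(u)]$, while the induced map $\rho_*\colon K_1(A)\to K_1(A')$ sends $[u]$ to $[\rho(u)]$. Hence $\Omega_{A'}\circ\rho_*=\rho_*\circ\Omega_A$, so the square of induced maps commutes and $\Omega$ is a natural isomorphism.
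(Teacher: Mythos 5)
Your argument is correct, and it fills in a proof that the paper deliberately omits (the text only remarks that the proof ``uses similar, but simpler, techniques to those in Proposition \ref{3}''). Your three preliminary facts are precisely the $K^{-1}(\C_A)$ analogues of parts (i)--(iii) of that proposition, and the identity $[e,a]=[1_n,a+(1_n-e)]$ together with the inverse $\Psi([e,a])=[a+(1_n-e)]$ is exactly the correspondence the paper has in mind elsewhere (compare the formula $\nu_1([p,u,g])=[u+1_n-p]$ in Section 2), so your route is the intended one rather than a genuinely different one. One computational remark on the single step you flag as substantial: with your matrix $T=\left[\begin{smallmatrix}1-e'&b\\ b^{-1}&1-e\end{smallmatrix}\right]$ (which does satisfy $T^2=1_{2n}$), the product $T\bigl((a+(1_n-e))\oplus 1_n\bigr)T$ does not simplify, because entries such as $(1-e')(a+1_n-e)$ and $b^{-1}(a+1_n-e)$ involve products of $e'$ or $b^{-1}$ with $a$ and with $1_n-e$ that the morphism identities do not control. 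The cross terms all vanish if you either place the nontrivial block in the other corner, where one gets exactly $T\bigl(1_n\oplus(a+(1_n-e))\bigr)T=(a'+(1_n-e'))\oplus 1_n$, or keep your block placement but take $T=\left[\begin{smallmatrix}b&1-e'\\ 1-e&b^{-1}\end{smallmatrix}\right]$ with inverse $\left[\begin{smallmatrix}b^{-1}&1-e\\ 1-e'&b\end{smallmatrix}\right]$; in both cases the identities $e'b=be=b$, $ea=ae=a$, $(1-e)a=a(1-e)=0$ and $ba=a'b$ kill every off-diagonal entry and turn the surviving diagonal entry into $a'+(1_n-e')$ on the nose. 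No homotopy is then needed: conjugate invertibles represent the same $K_1$-class, so $[a+(1_n-e)]=[a'+(1_n-e')]$ follows immediately. With that adjustment the proof is complete as written.
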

	
	\begin{definition}
Define $\Gamma_1(\phi)$ to be the set of all triples $(p,u,g)$ where $p$ is a projection in $M_\infty(\tilde A)$, $u$ is a unitary in $pM_\infty(\tilde A)p$, and $g$ is a unitary in $C([0,1])\otimes\phi(p)M_\infty(\tilde B)\phi(p)$ such that $g(0)=\phi(p)$ and $g(1)=\phi(u)$. For brevity, we will often denote these triples by the symbols $\sigma$ and $\tau$.
\begin{enumerate}[(i)]
    \item Define the direct sum operation $\oplus$ on $\Gamma_1(\phi)$ by
	\[(p,u,g)\oplus(p',u',g')=(p\oplus p',u\oplus u',g\oplus g').\]
	\item We say that two such triples $(p,u,g)$ and $(p',u',g')$ are \emph{isomorphic}, written $(p,u,g)\cong(p',u',g')$, if there is a partial isometry $v$ in $M_\infty(\tilde A)$ such that $v^*v=p$, $vv^*=p'$, $vu=u'v$, and $\phi(v)g(s)=g'(s)\phi(v)$ for all $0\leq s\leq 1$.
	\item A triple $(p,u,g)$ is called \emph{elementary} if there are homotopies $u_t$ and $g_t$ for $0\leq t\leq1$ such that $u_1=u$, $g_1=g$, $u_0=p$, $g_0(s)=\phi(p)$ for all $0\leq s\leq1$, and $g_t(1)=\phi(u_t)$ for all $0\leq t\leq1$.
	\item Two triples $\sigma$ and $\sigma'$ in $\Gamma_1(\phi)$ are \emph{equivalent}, written $\sigma\sim\sigma'$, if there exist elementary triples $\tau$ and $\tau'$ such that $\sigma\oplus\tau\cong\sigma'\oplus\tau'$.
\end{enumerate}
	   Denote by $[\sigma]$, or $[p,u,g]$, the equivalence class of the triple $\sigma=(p,u,g)$ via the relation $\sim$. $K_1(\phi)$ is then defined to be the quotient of $\Gamma_1(\phi)$ by the relation $\sim$, that is,
	\[\{[\sigma]\mid\sigma\in\Gamma_1(\phi)\}=\Gamma_1(\phi)/\sim\]
	\end{definition}
	
	It is easily checked that, like $\Gamma_0(\phi)$, the direct sum operation of triples in $\Gamma_1(\phi)$ behaves well with respect to the notions of isomorphism and elementary.
	
	\begin{prop} $K_1(\phi)$ is an abelian group when equipped with the binary operation
	\[[\sigma]+[\sigma]=[\sigma\oplus\sigma']\]
	where the identity element is given by $[0,0,0]$ and the inverse of $[p,u,g]$ is given by $[p,u^*,g^*]$.
	\end{prop}
	
	\begin{proof} We verify the last claim. We have
	\[[p,u,g]+[p,u^*,g^*]=[p\oplus p,u\oplus u^*,g\oplus g^*].\]
	Define the matrices
	\[a=\left[\begin{array}{cc}
	    0 & p \\
	    p & 0
	\end{array}\right],\qquad w=\left[\begin{array}{cc}
	    0 & u^* \\
	    u & 0
	\end{array}\right],\qquad h=\left[\begin{array}{cc}
	    0 & g^* \\
	    g & 0
	\end{array}\right].\]
	The first two are self-adjoint unitaries in $(p\oplus p)M_\infty(\tilde A)(p\oplus p)$, and the third is a self-adjoint unitary in $C([0,1])\otimes(\phi(p)\oplus\phi(p))M_\infty(\tilde B)(\phi(p)\oplus\phi(p))$. Observe that $h(1)=\phi(w)$. Define, for $0\leq t\leq1$,
	\[u_t=\exp(i\pi t(p\oplus p-a)/2)\exp(i\pi t(p\oplus p-w)/2)\]
	\[g_t=\exp(i\pi t(\phi(p)\oplus\phi(p)-\phi(a))/2)\exp(i\pi t(\phi(p)\oplus\phi(p)-h)/2)\]
	Then $u_0=p\oplus p$, $u_1=u\oplus u^*$, $g_1=g\oplus g^*$, and $g_0(s)=\phi(p)\oplus\phi(p)$ for all $0\leq s\leq1$. Moreover, $g_t(1)=\phi(u_t)$ for all $0\leq t\leq1$. It follows that $(p\oplus p,u\oplus u^*,g\oplus g^*)$ is elementary.
	\end{proof}
	
	The following result is similar to Proposition \ref{3}, so we omit the proof.
	
	\begin{prop}\label{k1prop}
	\begin{enumerate}[(i)]
	    \item Suppose we have two triples $(p,u,g)$ and $(p',u',g')$ and that $p=p'$. If $p$ is in $M_n(\tilde A)$ and $u_t$ is a path of unitaries from $u$ to $u'$ in $pM_n(\tilde A)p$ and $g_t$ is a path of unitaries from $g$ to $g'$ in $C([0,1])\otimes\phi(p)M_n(\tilde B)\phi(p)$ such that $g_t(1)=\phi(u_t)$ for all $0\leq t\leq1$, then $[p,u,g]=[p',u',g']$.
	    \item If $p=p'$, we have
	\[[p,u,g]+[p',u',g']=[p,uu',gg']=[p,u'u,g'g].\]
	\item If $(p,u,g)$ and $(p',u',g')$ are two triples in $\Gamma_1(\phi)$ such that $pp'=0$, then $(p,u,g)\oplus(p',u',g')\cong(p+p',u+u',g+g')$.
	\item Every triple in $\Gamma_1(\phi)$ is equivalent to one of the form $(1_n,u,g)$, where $n\geq1$ and $\dot u=\dot g(s)=1_n$ for all $0\leq s\leq1$.
	\item If $p$ is in $M_n(\tilde A)$, $[p,u,g]=0$ if and only if there is an integer $k\geq1$ and paths of unitaries $u_t$ in $(p\oplus1_k)M_{n+k}(\tilde A)(p\oplus1_k)$ and $g_t$ in $C([0,1])\otimes(\phi(p)\oplus1_k)M_{n+k}(\tilde B)(\phi(p)\oplus1_k)$ such that $u_0=p\oplus1_k$, $u_1=u\oplus1_k$, $g_0(s)=\phi(p)\oplus1_k$ for all $0\leq s\leq1$, $g_1=g\oplus1_k$, and $g_t(1)=\phi(u_t)$ for all $0\leq t\leq1$.
	\end{enumerate}
	\end{prop}
	
	We now collect some properties that hold for both relative groups.
	
	\begin{prop}\label{univ1} Suppose that $G$ is an abelian group and $\nu:\Gamma_j(\phi)\to G$ is a map that satisfies
	\begin{enumerate}[(i)]
	    \item $\nu(\sigma\oplus\tau)=\nu(\sigma)+\nu(\tau)$,
	    \item $\nu(\sigma)=0$ if $\sigma$ is elementary, and
	    \item if $\sigma\cong\tau$, then $\nu(\sigma)=\nu(\tau)$.
	\end{enumerate}
	Then $\nu$ factors to a unique group homomorphism $\alpha:K_j(\phi)\to G$.
	\end{prop}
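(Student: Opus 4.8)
The plan is to define $\alpha$ on equivalence classes by the only formula that could possibly work, $\alpha([\sigma])=\nu(\sigma)$, and then to verify in turn that this is well defined, that it is a group homomorphism, and that it is unique. Since every element of $K_j(\phi)$ has the form $[\sigma]$ for some $\sigma\in\Gamma_j(\phi)$, there is no freedom in the definition of $\alpha$, so once the formula is shown to be consistent the uniqueness statement will be essentially immediate.

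The substance of the argument is the well-definedness check, which is where I expect to have to be slightly careful: I must show that $\sigma\sim\sigma'$ implies $\nu(\sigma)=\nu(\sigma')$. Unwinding the definition of $\sim$, there exist elementary triples $\tau$ and $\tau'$ with $\sigma\oplus\tau\cong\sigma'\oplus\tau'$. Applying hypothesis (iii) to this isomorphism gives $\nu(\sigma\oplus\tau)=\nu(\sigma'\oplus\tau')$; applying the additivity (i) to each side rewrites this as $\nu(\sigma)+\nu(\tau)=\nu(\sigma')+\nu(\tau')$; and applying (ii), which forces $\nu(\tau)=\nu(\tau')=0$ since $\tau,\tau'$ are elementary, leaves precisely $\nu(\sigma)=\nu(\sigma')$. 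The three hypotheses are thus exactly tailored to the three ingredients ($\oplus$, isomorphism, elementary) out of which the relation $\sim$ is built, and apart from correctly unwinding $\sim$ there is no real obstacle here.

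It then remains to confirm the homomorphism property and record uniqueness. Because the operation on $K_j(\phi)$ is $[\sigma]+[\sigma']=[\sigma\oplus\sigma']$ (Proposition \ref{group} for $j=0$, and its $K_1$ analogue for $j=1$), additivity (i) yields $\alpha([\sigma]+[\sigma'])=\nu(\sigma\oplus\sigma')=\nu(\sigma)+\nu(\sigma')=\alpha([\sigma])+\alpha([\sigma'])$, so $\alpha$ is a group homomorphism. For uniqueness, if $\alpha':K_j(\phi)\to G$ is any homomorphism whose composition with the quotient map $\Gamma_j(\phi)\to K_j(\phi)$ equals $\nu$, then $\alpha'([\sigma])=\nu(\sigma)=\alpha([\sigma])$ for every $\sigma$, and since such classes exhaust $K_j(\phi)$ we conclude $\alpha'=\alpha$. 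The entire argument is uniform in $j$, relying only on the formal properties of $\oplus$, isomorphism, and elementary triples common to $\Gamma_0(\phi)$ and $\Gamma_1(\phi)$.
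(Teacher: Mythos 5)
Your proposal is correct and follows essentially the same argument as the paper: well-definedness comes from unwinding $\sigma\sim\sigma'$ into $\sigma\oplus\tau\cong\sigma'\oplus\tau'$ with $\tau,\tau'$ elementary and then applying (iii), (i), and (ii) in turn, after which the homomorphism property and uniqueness are immediate. The only difference is that you spell out the uniqueness and homomorphism checks that the paper leaves as one-line remarks.
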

	
	\begin{proof}
	If $\sigma\sim\sigma'$, find elementary triples $\tau$ and $\tau'$ such that $\sigma\oplus\tau\cong\sigma'\oplus\tau'$. Then
	\[\nu(\sigma
	)=\nu(\sigma)+\nu(\tau)=\nu(\sigma\oplus\tau)=\nu(\sigma'\oplus\tau')=\nu(\sigma')+\nu(\tau')=\nu(\sigma')\]
	So the map $\alpha([\sigma]):=\nu(\sigma)$ is well-defined. It is a group homomorphism by property (i).
	\end{proof}
	
	If $\phi:A\to B$ and $\psi:C\to D$ are $^*$-homomorphisms, we denote by $\phi\oplus\psi$ the component-wise $^*$-homomorphism $A\oplus C\to B\oplus D$.
	
	\begin{prop}\label{direct} Suppose $\phi:A\to B$ and $\psi:C\to D$ are $^*$-homomorphisms. Then there are natural isomorphisms $K_*(\phi\oplus\psi)\to K_*(\phi)\oplus K_*(\psi)$ that satisfy
	\[[(p,p'),(q,q'),(v,v')]\mapsto([p,q,v],[p',q',v'])\]
	in the case of $K_0$, and
	\[[(p,p'),(u,u'),(g,g')]\mapsto([p,u,g],[p',u',g'])\]
	in the case of $K_1$.
	\end{prop}
	
	\begin{proof}
	For a triple $((p,p'),(q,q'),(v,v'))$ in $\Gamma_0(\phi\oplus\psi)$, define
	\[\nu((p,p'),(q,q'),(v,v'))=([p,q,v],[p',q',v']).\]
	It is straightforward to check that $\nu$ satisfies the hypotheses of Proposition \ref{univ1}, so we get a well-defined group homomorphism that factors $\nu$. The fact that the group homomorphism is surjective is clear, and injectivity follows from a simple application of part (v) of Proposition \ref{3}. The proof is similar for $K_1$.
	\end{proof}
	
	\begin{prop}\label{induce}
	Suppose that
	\begin{center}
\begin{tikzcd}
A \arrow[r, "\phi"] \arrow[d, "\alpha"']   & B \arrow[d, "\beta"]  &              \\
C \arrow[r, "\psi"]              & D           
\end{tikzcd}
	\end{center}
	is a commutative diagram of C$^*$-algebras and $^*$-homomorphisms. Then there are well-defined group homomorphisms $(\alpha,\beta)_*:K_j(\phi)\to K_j(\psi)$ that satisfy
	\[(\alpha,\beta)_*([p,q,v])=[\alpha(p),\alpha(q),\beta(v)]\]
	for a triple $(p,q,v)$ in $\Gamma_0(\phi)$ and\emph{
	\[(\alpha,\beta)_*([p,u,g])=[\alpha(p),\alpha(u),\id_{C([0,1])}\otimes\beta(g)]\]}
	for a triple $(p,u,g)$ in $\Gamma_1(\phi)$. If $\alpha$ and $\beta$ are $^*$-isomorphisms, then $(\alpha,\beta)_*$ is a group isomorphism.
	\end{prop}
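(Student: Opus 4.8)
The plan is to invoke the universal property established in Proposition \ref{univ1}. Throughout I regard $\alpha$ and $\beta$ as acting on the unitizations and matrix algebras in the usual way, so that $\alpha:M_\infty(\tilde A)\to M_\infty(\tilde C)$ and $\beta:M_\infty(\tilde B)\to M_\infty(\tilde D)$ are the induced $^*$-homomorphisms. For $K_0$ I would first define a map $\nu:\Gamma_0(\phi)\to K_0(\psi)$ on triples by $\nu(e,f,b)=[\alpha(e),\alpha(f),\beta(b)]$, and for $K_1$ a map $\nu:\Gamma_1(\phi)\to K_1(\psi)$ by $\nu(e,a,g)=[\alpha(e),\alpha(a),\beta(g)]$, where $\beta(g)$ denotes the pointwise application of $\beta$.

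Before anything else I would verify that these formulas produce genuine triples in $\Gamma_j(\psi)$, and this is where the commutativity of the diagram enters decisively. Since $\alpha$ is multiplicative, $\alpha(e)$ is again idempotent; and using $\psi\circ\alpha=\beta\circ\phi$ one computes $\psi(\alpha(f))\,\beta(b)\,\psi(\alpha(e))=\beta(\phi(f)\,b\,\phi(e))=\beta(b)$, so $\beta(b)$ is a morphism from $\psi(\alpha(e))$ to $\psi(\alpha(f))$, with inverse $\beta(b^{-1})$ because $\beta(b^{-1})\beta(b)=\beta(\phi(e))=\psi(\alpha(e))$ and similarly on the other side. The analogous checks for $K_1$ confirm $\beta(g)(0)=\psi(\alpha(e))$, $\beta(g)(1)=\psi(\alpha(a))$, and that each $\beta(g)(s)$ is an invertible morphism.

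Next I would check the three hypotheses of Proposition \ref{univ1}. Additivity is immediate because $\alpha$ and $\beta$ respect block-diagonal sums. Preservation of elementary triples is the one step with analytic content: if $b_t$ (resp.\ $a_t,g_t$) is the defining homotopy, then $\beta(b_t)$ is again continuous because every $^*$-homomorphism is norm-contractive, and it runs through invertible morphisms from $\psi(\alpha(e))$ to itself, so $\nu$ sends elementary triples to $0$. For isomorphism invariance, given intertwiners $c,d$ with $\phi(d)b=b'\phi(c)$, commutativity again gives $\psi(\alpha(d))\beta(b)=\beta(\phi(d)b)=\beta(b'\phi(c))=\beta(b')\psi(\alpha(c))$, so $(\alpha(e),\alpha(f),\beta(b))\cong(\alpha(e'),\alpha(f'),\beta(b'))$. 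Proposition \ref{univ1} then yields the homomorphisms $\alpha_*$ with the stated formulas, and the $K_1$ argument is identical in structure.

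Finally, for the isomorphism claim, I note that when $\alpha,\beta$ are $^*$-isomorphisms the rearranged square with horizontal maps $\psi,\phi$ and vertical maps $\alpha^{-1},\beta^{-1}$ again commutes (from $\psi\circ\alpha=\beta\circ\phi$ one gets $\phi\circ\alpha^{-1}=\beta^{-1}\circ\psi$), so the same construction produces $(\alpha^{-1})_*:K_j(\psi)\to K_j(\phi)$. On representatives the composites act as $(\alpha^{-1})_*\alpha_*[e,f,b]=[\alpha^{-1}\alpha(e),\alpha^{-1}\alpha(f),\beta^{-1}\beta(b)]=[e,f,b]$ and symmetrically, so $\alpha_*$ is invertible with inverse $(\alpha^{-1})_*$. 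I do not anticipate a genuine obstacle: the whole argument is bookkeeping organized around Proposition \ref{univ1}, and the only point demanding care is ensuring the image triples are well-typed, for which the single identity $\psi\circ\alpha=\beta\circ\phi$ does all the work.
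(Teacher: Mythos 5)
Your proposal is correct and follows exactly the route of the paper's proof: define $\nu$ on triples by applying $\alpha$ and $\beta$ entrywise, verify the three hypotheses of Proposition \ref{univ1} (which the paper leaves as "easy to check" and you carry out explicitly), and obtain the inverse in the $^*$-isomorphism case from the reversed commutative square via $(\alpha^{-1})_*$. No substantive differences.
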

	
	\begin{proof}
	For a triple $(p,q,v)$ in $\Gamma_0(\phi)$, set $\nu(p,q,v)=[\alpha(p),\alpha(q),\beta(v)]$. Again, the hypotheses of Proposition \ref{univ1} are easy to check, so $\nu$ factors to a group homomorphism $(\alpha,\beta)_*$. If $\alpha$ and $\beta$ are $^*$-isomorphisms, then the diagram
	\begin{center}
\begin{tikzcd}
C \arrow[r, "\psi"] \arrow[d, "\alpha^{-1}"']   & D \arrow[d, "\beta^{-1}"] \\
                                 
A \arrow[r, "\phi"]              & B           
\end{tikzcd}
	\end{center}
	is commutative and the same argument works to obtain the group homomorphism $(\alpha^{-1},\beta^{-1})_*$, which is easily seen to be the inverse of $(\alpha,\beta)_*$. The proof is again similar for $K_1$.
	\end{proof}
	
	As an application of the above results, we will show that if $A$ and $B$ are unital and $\phi(1)=1$, one may define $K_*(\phi)$ without unitizations while remaining consistent with the results above. To verify this, let $K_*^u(\phi)$ be the group defined in the same way as $K_*(\phi)$, but avoid unitizing $A$ and $B$ and use the units already present. Notice $K_*(\phi)$ and $K_*^u(\tilde\phi)$ are precisely the same objects, and all preceding results about $K_*(\phi)$ remain true for $K_*^u(\phi)$ with appropriate modifications.
	
		\begin{prop}\label{unit} If $A$ and $B$ are unital and $\phi(1)=1$, then $K_j(\phi)$ and $K_j^u(\phi)$ are isomorphic as groups.
		\end{prop}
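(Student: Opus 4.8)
The plan is to exploit the fact, observed just before the statement, that $K_j(\phi)$ is by definition nothing other than $K_j^u(\tilde\phi)$, where $\tilde\phi:\tilde A\to\tilde B$ is the induced map on unitizations. So it suffices to identify $K_j^u(\tilde\phi)$ with $K_j^u(\phi)$. The key structural input is that when $A$ is already unital its unitization splits: the map $\tilde A\to A\oplus\bC$ sending $a+\lambda 1\mapsto(a+\lambda 1_A,\lambda)$ is a $^*$-isomorphism, and likewise for $B$. First I would record this splitting and, crucially, verify that under these two isomorphisms $\tilde\phi$ is carried to $\phi\oplus\id_\bC:A\oplus\bC\to B\oplus\bC$. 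This is precisely the step that consumes the hypothesis $\phi(1)=1$: the scalar coordinate of $\tilde\phi$ is already the identity, and $\phi(1)=1$ is exactly what forces the $A$-coordinate of the image of $a+\lambda 1_A$ to be $\phi(a)+\lambda 1_B$, matching $(\phi\oplus\id_\bC)(a+\lambda 1_A,\lambda)$.

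With this intertwining in hand, the rest is an assembly of results already proved, in their $K^u$-versions, which the remark preceding the statement grants us verbatim. Applying the $K^u$-analogue of Proposition \ref{induce} to the commuting square formed by the two splitting $^*$-isomorphisms yields a group isomorphism $K_j^u(\tilde\phi)\cong K_j^u(\phi\oplus\id_\bC)$. Next, the $K^u$-analogue of Proposition \ref{direct} gives $K_j^u(\phi\oplus\id_\bC)\cong K_j^u(\phi)\oplus K_j^u(\id_\bC)$. Thus the entire statement reduces to the single computation $K_j^u(\id_\bC)=0$.

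To finish I would establish the more general vanishing $K_j^u(\id_D)=0$ for every unital $D$. For $j=0$, a triple $(e,f,b)$ in $\Gamma_0^u(\id_D)$ has $b$ an invertible morphism from $e$ to $f$, and one checks directly that $(e,f,b)\cong(e,e,e)$ using the morphisms $d=b^{-1}$ from $f$ to $e$ and $c=e$ from $e$ to $e$: the intertwining condition $\phi(d)b=b'\phi(c)$ reads $b^{-1}b=e\cdot e$, which holds. Since $(e,e,e)$ is elementary, $[e,f,b]=0$. For $j=1$, a triple $(e,a,g)$ in $\Gamma_1^u(\id_D)$ is simply an idempotent $e$ together with a path $g$ of invertibles in $eM_\infty(D)e$ from $e$ to $a$; setting $a_t=g(t)$ and $g_t(s)=g(st)$ exhibits $(e,a,g)$ as elementary, whence $[e,a,g]=0$.

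The only genuinely delicate point is the verification that the splitting isomorphisms intertwine $\tilde\phi$ with $\phi\oplus\id_\bC$; once that is secured, the hypothesis $\phi(1)=1$ has done all its work and the remaining steps are formal consequences of Propositions \ref{induce} and \ref{direct} together with $K_j^u(\id_D)=0$. I would also note that the resulting isomorphism is the one induced by the inclusions $A\hookrightarrow\tilde A$ and $B\hookrightarrow\tilde B$, so it is natural.
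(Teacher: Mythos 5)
Your proof is correct and follows essentially the same route as the paper: identify $K_j(\phi)=K_j^u(\tilde\phi)$ with $K_j^u(\phi\oplus\id_\bC)$ via the splitting isomorphism $\tilde A\cong A\oplus\bC$ (the paper uses the inverse map $\nu_A(a,\lambda)=a+\lambda(1_{\tilde A}-1_A)$), then apply Propositions \ref{induce} and \ref{direct} and the vanishing of $K_j^u(\id_\bC)$. Your direct verification that $K_j^u(\id_D)=0$ for unital $D$ is a small improvement over the paper, which calls this ``rather clear'' and points to Corollary \ref{zero} --- a statement about $K_j$ rather than $K_j^u$, so invoking it literally would presuppose the proposition being proved.
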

	
	\begin{proof} The map $\nu_A:A\oplus\bC\to\tilde A$ defined by $\nu_A(a,\lambda)=a+\lambda(1_{\tilde A}-1_A)$ is a $^*$-isomorphism and the diagram
	\begin{center}
\begin{tikzcd}
A\oplus\bC \arrow[d, "\nu_A"] \arrow[r, "\phi\oplus\id_\bC"]   & B\oplus\bC \arrow[d, "\nu_B"] \\
                   
\tilde A \arrow[r, "\tilde\phi"]     & \tilde B                       
\end{tikzcd}
	\end{center}
	is commutative. Therefore $K_j(\phi)=K_j^u(\tilde\phi)$ is isomorphic to $K_j^u(\phi\oplus\id_\bC)$ by Proposition \ref{induce}. Then
	\[K_j(\phi)=K_j^u(\tilde\phi)\cong K_j^u(\phi\oplus\id_\bC)\cong K_j^u(\phi)\oplus K_j^u(\id_\bC)\cong K_j^u(\phi)\]
	where the third isomorphism is due to Proposition \ref{direct}. The fact that $K_j^u(\id_\bC)=0$ is rather clear, but the skeptical reader is referred to part (ii) of Corollary \ref{zero}.
	\end{proof}
	
	\section{Proofs}
	
	\subsection{Proof of part (i) of Theorem \ref{main}}
	
	Define the map $\mu_0:K_1(B)\to K_0(\phi)$ by $\mu_0([u])=[1_n,1_n,u]$, where $u$ is a unitary in $M_n(\tilde B)$. By part (i) of Proposition \ref{3}, $\mu_0$ is well-defined, and clearly it is a group homomorphism.
	
	Define a map $\nu:\Gamma_0(\phi)\to K_0(A)$ by $\nu(p,q,v)=[p]-[q]$. Observe that the image of $\nu$ is indeed in $K_0(A)$ (not just $K_0(\tilde A)$) since $\dot v^*\dot v=\dot p$ and $\dot v\dot v^*=\dot q$, hence $[\dot p]=[\dot q]$. It is easy to check that $\nu$ satisfies the hypotheses of Proposition \ref{univ1}, hence factors to a well-defined group homomorphism $\nu_0:K_0(\phi)\to K_0(A)$.
    
	\begin{prop}\label{exact1} The sequence
	\begin{center}
	\begin{tikzcd}
K_1(A) \arrow[r, "\phi_*"] & K_1(B) \arrow[r, "\mu_0"] & {K_0(\phi)} \arrow[r, "\nu_0"] & K_0(A) \arrow[r, "\phi_*"] & K_0(B)
\end{tikzcd}
\end{center}
is exact.
\end{prop}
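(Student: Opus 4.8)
The plan is to establish exactness at each of the three interior terms $K_1(B)$, $K_0(\phi)$, and $K_0(A)$, in each case splitting the argument into the easy inclusion $\im\subseteq\ker$ (the relevant composite vanishes) and the substantive reverse inclusion $\ker\subseteq\im$. The vanishing of the composites is routine. For $\nu_0\mu_0$ we compute $\nu_0\mu_0([u])=[1_n]-[1_n]=0$. For $\phi_*\nu_0$ we use that in a refined triple $(p,q,v)$ the partial isometry $v$ satisfies $v^*v=\phi(p)$ and $vv^*=\phi(q)$, so $\phi(p)$ and $\phi(q)$ are Murray--von Neumann equivalent and $[\phi(p)]-[\phi(q)]=0$ in $K_0(B)$. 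For $\mu_0\phi_*$ we observe that $(1_n,1_n,\phi(u))$ is $\cong(1_n,1_n,1_n)$ via the intertwiner $c=u$, $d=1_n$ (which gives $\phi(d)\phi(u)=1_n\cdot\phi(c)$), and the latter triple is elementary, so $\mu_0\phi_*([u])=0$.

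For exactness at $K_0(A)$, I would take $x\in\ker\phi_*$ and use the normal form for $K_0(A)$ to write $x=[p]-[1_n]$ with $p$ a projection in some $M_m(\tilde A)$ and $\dot p=1_n$. Since $\phi_*(x)=[\phi(p)]-[1_n]=0$ in $K_0(B)$, the projections $\phi(p)$ and $1_n$ are stably Murray--von Neumann equivalent, so for suitable $l$ there is a partial isometry $v$ with $v^*v=\phi(p)\oplus 1_l$ and $vv^*=1_{n+l}$. The triple $(p\oplus 1_l,1_{n+l},v)$ then lies in $\Gamma_0^*(\phi)$ and satisfies $\nu_0[p\oplus 1_l,1_{n+l},v]=[p]-[1_n]=x$, giving $x\in\im\nu_0$.

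For exactness at $K_0(\phi)$, I would start from $[p,q,v]\in\ker\nu_0$ and invoke Lemma \ref{useful}(i) to assume $q=1_n$ and $\dot p=\dot v=1_n$. Then $[p]-[1_n]=0$ in $K_0(A)$ forces $p\oplus 1_k$ to be Murray--von Neumann equivalent to $1_{n+k}$ for some $k$, via a partial isometry $w\in M_\infty(\tilde A)$. After adding the elementary triple $(1_k,1_k,1_k)$, the refined isomorphism implemented by $c=w$ and $d=1_{n+k}$ transports $(p\oplus 1_k,1_{n+k},v\oplus 1_k)$ to $(1_{n+k},1_{n+k},v')$ with $v'=(v\oplus 1_k)\phi(w^*)$; a short computation using $w^*w=p\oplus 1_k$, $ww^*=1_{n+k}$, and $v\phi(p)=v$ shows $v'^*v'=v'v'^*=1_{n+k}$, so $v'$ is a unitary in $M_{n+k}(\tilde B)$. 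Hence $[p,q,v]=\mu_0([v'])\in\im\mu_0$.

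For exactness at $K_1(B)$, I would feed the hypothesis $\mu_0([u])=[1_n,1_n,u]=0$ into Lemma \ref{useful}(ii) with $p=1_n$ (so $m=n$ and $0_{m-n}$ is empty). This yields $k\ge 0$ and a partial isometry $w\in M_{n+k}(\tilde A)$ with $w^*w=\dot w=ww^*=1_{n+k}$ — that is, $w$ is a unitary over $\tilde A$ with scalar part $1_{n+k}$, so $[w]\in K_1(A)$ — such that $(u\oplus 1_k)\phi(w)$ is a unitary homotopic to $1_{n+k}$. Reading this in $K_1(B)$ gives $[u]+[\phi(w)]=0$, whence $[u]=[\phi(w^*)]=\phi_*([w^*])\in\im\phi_*$. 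The main obstacle lies precisely in the two reverse inclusions at $K_0(\phi)$ and $K_1(B)$: both hinge on the structural reductions of Lemma \ref{useful}, whose content is that the vanishing of a relative class can always be normalized so as to be witnessed by a genuine unitary over $\tilde A$ or $\tilde B$, which is exactly what converts the relative information back into an element of an ordinary $K$-group.
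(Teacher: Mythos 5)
Your proposal is correct and follows essentially the same route as the paper: the compositions are checked directly, exactness at $K_0(A)$ comes from lifting a Murray--von Neumann equivalence witnessing $[\phi(p)]=[\phi(q)]$ into a triple, exactness at $K_0(\phi)$ comes from transporting $(p\oplus 1_k,1_{n+k},v\oplus 1_k)$ along a partial isometry $w$ over $\tilde A$ to a triple of the form $(1_{n+k},1_{n+k},v')$ with $v'$ unitary, and exactness at $K_1(B)$ is exactly the paper's application of Lemma \ref{useful}(ii). The only differences are cosmetic bookkeeping (you normalize to $[p]-[1_n]$ and arrange $ww^*=1_{n+k}$ directly, where the paper keeps the $1_n\oplus 0_{m-n}\oplus 1_k$ positioning and fills the hole afterwards).
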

	\begin{proof} It is quite clear that all compositions are zero. If $\phi_*([p]-[q])=[\phi(p)]-[\phi(q)]=0$, choose $k\geq0$ and $w$ in $M_\infty(\tilde B)$ such that $w^*w=\phi(p)\oplus1_k$ and $ww^*=\phi(q)\oplus1_k$. Then
	\[[p]-[q]=\nu_0([p\oplus1_k,q\oplus1_k,w]),\]
	which shows exactness at $K_0(A)$.
	
	If $(p,1_n,v)$ is such that $\nu_0([p,1_n,v])=[p]-[1_n]=0$, choose $k\geq0$ and $w$ in $M_\infty(\tilde A)$ such that $w^*w=p\oplus1_k$ and $ww^*=1_n\oplus0_{m-n}\oplus1_k$. Then
	\[(p\oplus1_k,1_n\oplus0_{m-n}\oplus1_k,v\oplus1_k)\cong(1_n\oplus0_{m-n}\oplus1_k,1_n\oplus0_{m-n}\oplus1_k,(v\oplus1_k)\phi(w^*))\]
	and hence
	\[[p,1_n,v]=\mu_0([(v\oplus1_k)\phi(w^*)+0_n\oplus1_{m-n}\oplus0_k]),\]
	which shows exactness at $K_0(\phi)$.
	
	Finally, if $\mu_0([u])=[1_n,1_n,u]=0$, use part (v) of Proposition \ref{3} to find $k\geq0$ and a partial isometry $w$ such that $\phi(w)(u\oplus1_k)$ is a unitary and homotopic to $1_{n+k}$ in $M_{n+k}(\tilde B)$. Since $u\oplus1_k$ is a unitary, so is $w$ and $u\oplus1_k$ is homotopic to $\phi(w^*)$. Thus
	\[[u]=[u\oplus1_k]=[\phi(w^*)]=\phi_*([w^*]),\]
	which shows exactness at $K_1(B)$.
	\end{proof}
	
	For a unitary $g$ in $C([0,1])\otimes M_n(\tilde B)$ with $g(0)=g(1)=\dot g=1_n$, set $\mu_1([g])=[1_n,1_n,g]$. By part (i) of Proposition \ref{k1prop}, this is a well-defined group homomorphism $\mu_1:K_1(SB)\to K_1(\phi)$. For a triple $(p,u,g)$ in $\Gamma_1(\phi)$, define $\nu(p,u,g)=[p,u]$ (here we use the picture of $K_1$ described before Proposition \ref{k1iso}). The hypotheses of Proposition \ref{univ1} are satisfied, so we get a group homomorphism $\nu_1:K_1(\phi)\to K_1(A)$ such that $\nu_1([p,u,g])=[p,u]$. If $p=1_n$, the formula is more simply $\nu_1([1_n,u,g])=[u]$.
	
	\begin{prop}\label{exact2} The sequence
		\begin{center}
	\begin{tikzcd}
K_1(SA) \arrow[r, "(S\phi)_*"] & K_1(SB) \arrow[r, "\mu_1"] & {K_1(\phi)} \arrow[r, "\nu_1"] & K_1(A) \arrow[r, "\phi_*"] & K_1(B)
\end{tikzcd}
\end{center}
	is exact.
	\end{prop}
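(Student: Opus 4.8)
The plan is to mirror the proof of Proposition \ref{exact1}, representing every class of $K_1(\phi)$ in the refined form $[1_n,u,g]$ provided by Lemma \ref{useful1}, where $u$ is unitary, $g(0)=1_n$, $g(1)=\phi(u)$, and $\dot u=\dot g(s)=1_n$, and representing classes of $K_1(SB)$ (and $K_1(SA)$) by unitary loops based at $1_n$. First I would check that all compositions vanish. For $\nu_1\circ\mu_1$ and $\phi_*\circ\nu_1$ this is immediate: $\nu_1([1_n,1_n,g])=[1_n]=0$, and $\phi_*(\nu_1[1_n,u,g])=[\phi(u)]=0$ since $g$ is a path of unitaries from $1_n$ to $\phi(u)$. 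The one subtle case is $\mu_1\circ(S\phi)_*$, where I must show that for a unitary loop $h$ in $M_n(\tilde A)$ based at $1_n$ the triple $(1_n,1_n,\phi(h))$ is elementary. The trick is to use the loop itself as the path of invertibles: taking $a_t=h(t)$ (a loop from $1_n$ to $1_n$) and $g_t(s)=\phi(h(ts))$ gives $g_t(0)=1_n$, $g_t(1)=\phi(h(t))=\phi(a_t)$, $g_0\equiv 1_n$, and $g_1=\phi(h)$, exhibiting the triple as elementary.

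For exactness at $K_1(A)$, suppose $\phi_*[u]=[\phi(u)]=0$, with $\dot u=1_n$. After stabilizing, $\phi(u)\oplus 1_k=\phi(u\oplus 1_k)$ is homotopic to the identity through unitaries; choosing such a path $g$, the triple $(1_{n+k},u\oplus 1_k,g)$ lies in $\Gamma_1^*(\phi)$ and $\nu_1$ of its class equals $[u\oplus 1_k]=[u]$, so $[u]\in\operatorname{im}\nu_1$. For exactness at $K_1(\phi)$, suppose $\nu_1[1_n,u,g]=[u]=0$; after stabilizing I may assume $u$ is joined to $1_n$ by a unitary path $u_t$ with $u_0=1_n$, $u_1=u$. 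As in the first paragraph, $(1_n,u,\phi(u_\bullet))$ with $\phi(u_\bullet)(s)=\phi(u_s)$ is elementary, so adding its inverse and applying parts (i)--(ii) of Proposition \ref{k1prop} gives
\[[1_n,u,g]=[1_n,1_n,g\cdot\phi(u_\bullet)^{-1}].\]
Since $g(0)=\phi(u_0)=1_n$ and $g(1)=\phi(u)=\phi(u_1)$, the pointwise product $s\mapsto g(s)\phi(u_s)^{-1}$ is a unitary loop based at $1_n$, hence defines a class in $K_1(SB)$ whose image under $\mu_1$ is $[1_n,u,g]$.

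The heart of the matter is exactness at $K_1(SB)$. Given a loop $g$ with $\mu_1[g]=[1_n,1_n,g]=0$, I would invoke part (ii) of Lemma \ref{useful1} to obtain $k\geq 0$, a path $u_t$ of unitaries in $M_{n+k}(\tilde A)$, and a path $g_t$ of unitaries in $C[0,1]\otimes M_{n+k}(\tilde B)$ with $u_0=u_1=1_{n+k}$ (since the middle entry is $1_n$), $g_0\equiv 1_{n+k}$, $g_1=g\oplus 1_k$, $g_t(0)=1_{n+k}$, and $g_t(1)=\phi(u_t)$. Because $u_0=u_1=1_{n+k}$, the assignment $t\mapsto u_t$ is a unitary loop defining a class $[u_\bullet]\in K_1(SA)$ with $(S\phi)_*[u_\bullet]=[t\mapsto\phi(u_t)]=[t\mapsto g_t(1)]$. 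The two-parameter family $G(s,t)=g_t(s)$ is then a homotopy of unitary loops over the square whose four edges are the constant loop (bottom, $t=0$), the loop $g\oplus 1_k$ (top, $t=1$), the loop $\phi(u_\bullet)$ (right, $s=1$), and the constant loop (left, $s=0$). The standard boundary-of-a-square argument yields $[g]=[g\oplus 1_k]=[\phi(u_\bullet)]$ in $K_1(SB)$, so $[g]\in\operatorname{im}(S\phi)_*$.

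I expect this final step to be the main obstacle. Two points need care: first, confirming that the homotopy extracted from Lemma \ref{useful1} genuinely satisfies $g_t(0)=1_{n+k}$ (which it should, since every triple appearing in that argument lies in $\Gamma_1(\phi)$ and hence has third entry starting at $\phi$ of its first entry, here $1_{n+k}$), so that all four edges of the square are honest based loops; and second, running the rectangle argument cleanly to convert the two-parameter homotopy into the equality of $K_1(SB)$-classes, while keeping track of the scalar-part normalizations needed for the loops to be valid representatives. The exactness at $K_1(A)$ and $K_1(\phi)$ should by contrast be routine once the refined $\Gamma_1^*$-picture is in hand.
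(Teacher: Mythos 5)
Your proposal is correct and follows essentially the same route as the paper: represent classes in the refined $\Gamma_1^*$ form, check the compositions vanish, and use Lemma \ref{useful1}(ii) for exactness at $K_1(SB)$. The only differences are cosmetic --- at $K_1(\phi)$ you cancel the elementary triple $(1_n,u,\phi(u_\bullet))$ by pointwise multiplication where the paper instead concatenates $g$ with the reversed path $\phi(f(2-2s))$, and at $K_1(SB)$ you phrase the final step as a boundary-of-the-square argument where the paper writes out the reparametrized homotopy $\tilde g_t$ explicitly; the two devices are interchangeable, and your explicit check that $(1_n,1_n,\phi(h))$ is elementary fills in a step the paper leaves as ``clearly zero.''
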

	
	\begin{proof} Again, all compositions are clearly zero. If $\phi_*([u])=0$, we may find $k\geq0$ and a unitary $g$ in $C([0,1])\otimes M_{n+k}(\tilde B)$ such that $g(1)= \phi(u)\oplus1_k$ and $g(0)=1_{n+k}$. Then
	\[[u]=\nu_1([1_{n+k},u\oplus1_k,g]),\]
	which shows exactness at $K_1(A)$.
	
	If $\nu_1([1_n,u,g])=[u]=0$, find $k\geq0$ and a unitary $f$ in $C([0,1])\otimes M_{n+k}(\tilde A)$ such that $f(0)=1_{n+k}$ and $f(1)=u\oplus1_k$. Set
	\[\tilde g(s)=\left\{\begin{array}{cc}
	    g(2s)\oplus1_k & 0\leq s\leq1/2 \\
	    \phi(f(2-2s)) & 1/2\leq s\leq1
	\end{array}\right.\]
	Then $\tilde g$ is a unitary in $C([0,1])\otimes M_{n+k}(\tilde B)$ and $\tilde g(0)=\tilde g(1)=1_{n+k}$. Now for a fixed $t$ in $[0,1]$, the function $g_t$ defined by
	\[g_t(s)=\left\{\begin{array}{cc}
	    g(s(1-\frac12t)^{-1})\oplus1_k & 0\leq s\leq1-\frac12t \\
	    \phi(f(3-2s-t)) & 1-\frac12t\leq s\leq1
	\end{array}\right.\]
	satisfies $g_0=g\oplus1_k$, $g_1=\tilde g$, and $g_t(1)=\phi(f(1-t))$, and so
	\[[1_n,u,g]=[1_{n+k},u\oplus1_k,g\oplus1_k]=[1_{n+k},1_{n+k},\tilde g]=\mu_1([\tilde g]),\]
	which shows exactness at $K_1(\phi)$.
	
	Finally, if $\mu_1([g])=[1_n,1_n,g]=0$, use part (v) of Proposition \ref{k1prop} to find an integer $k$ and homotopies $u_t$ and $g_t$ such that $u_0=u_1=1_{n+k}$, $g_1=g\oplus1_k$, $g_0=1_{n+k}$, and $g_t(1)=\phi(u_t)$ for all $t$. Write $f(t)=u_t$ and set
	\[\tilde g_t(s)=\left\{\begin{array}{cc}
	    g_t(2s) & 0\leq s\leq1/2 \\
	    \phi(f((2-2t)s+2t-1)) & 1/2\leq s\leq1
	\end{array}\right.\]
	Then $\tilde g_t(0)=\tilde g_t(1)=1_{n+k}$ for all $t$ and
	\[\tilde g_1(s)=\left\{\begin{array}{cc}
	    g(2s)\oplus1_k & 0\leq s\leq1/2 \\
	    1_{n+k} & 1/2\leq s\leq1
	\end{array}\right.\]
	and
	\[\tilde g_0(s)=\left\{\begin{array}{cc}
	    1_{n+k} & 0\leq s\leq1/2 \\
	    \phi(f(2s-1)) & 1/2\leq s\leq1
	\end{array}\right.\]
	which are homotopic to $g\oplus1_k$ and $S\phi(f)$, respectively. Thus
	\[[g]=[g\oplus1_k]=[S\phi(f)]=(S\phi)_*([f]),\]
	which shows exactness at $K_1(SB)$.
	\end{proof}
	
	\begin{prop} If $\phi=0$, then the sequence in part (i) of Theorem \ref{main} splits at $K_0(A)$ and $K_1(A)$. In other words, for each $j=0,1$ there is a group homomorphism $\lambda_j:K_j(A)\to K_j(\phi)$ such that $\nu_j\circ\lambda_j$ is the identity map on $K_j(A)$.
	\end{prop}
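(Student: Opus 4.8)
The plan is to construct the right inverses $\lambda_0$ and $\lambda_1$ explicitly. The decisive simplification when $\phi=0$ is that the induced map $\tilde\phi\colon\tilde A\to\tilde B$ is the scalar map $a\mapsto\dot a\,1_{\tilde B}$; in particular $\phi(e)=\dot e$ for every idempotent $e$, and $\phi$ carries a scalar element of $\tilde A$ to the same scalar in $\tilde B$. This last property is precisely what permits a section, and it is used critically in the $K_1$ argument.

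For $j=0$ I would first define a map on projections by $p\mapsto[p,\dot p,\dot p]$. This is a legitimate element of $\Gamma_0^*(\phi)$ since $\dot p^*\dot p=\dot p=\phi(p)$ and $\dot p\,\dot p^*=\dot p=\phi(\dot p)$. If $p$ and $p'$ are Murray–von Neumann equivalent via a partial isometry $c$, then taking $d=\dot c$ witnesses $(p,\dot p,\dot p)\cong_*(p',\dot{p'},\dot{p'})$, because $\phi(c)=\phi(d)=\dot c$ and $\dot c\,\dot p=\dot c\,\dot c^*\dot c=\dot{p'}\,\dot c$. Together with additivity under $\oplus$, this shows $p\mapsto[p,\dot p,\dot p]$ descends to a semigroup homomorphism on Murray–von Neumann classes, hence by the universal property of the Grothendieck construction to a group homomorphism $S\colon K_0(\tilde A)\to K_0(\phi)$. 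Setting $\lambda_0=S|_{K_0(A)}$ and computing $\nu_0(S([p]))=[p]-[\dot p]$, for a class $[p]-[q]\in K_0(A)$ the scalar parts have equal rank, so $[\dot p]=[\dot q]$ in $K_0(\tilde A)$ and $\nu_0\circ\lambda_0$ is the identity.

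For $j=1$ I would represent a class in $K_1(A)$ by a unitary $u$ in $M_n(\tilde A)$ with $\dot u=1_n$ and set $\lambda_1([u])=[1_n,u,g_0]$, where $g_0$ is the \emph{constant} path at $1_n$; this is a valid triple precisely because $\phi(u)=\dot u=1_n$, so the constant path has the correct endpoints (and it is here that general $\phi$ fails, since then $\phi(u)$ is an arbitrary unitary with no canonical path to $1_n$). That $\nu_1\circ\lambda_1=\mathrm{id}$ is immediate, and the homomorphism property follows from part (ii) of Proposition \ref{k1prop} since the constant path is multiplicative. The substance is well-definedness: given two such representatives $u,u'$ of the same class, I would use part (ii) of Proposition \ref{k1prop} to reduce to showing $[1_n,uu'^{-1},g_0]=0$, and then use part (i) of Proposition \ref{k1prop} along a unitary homotopy $w_t$ from $uu'^{-1}$ to $1_n$ (via $a_t=w_t$ and $g_t(s)=\dot w_{ts}$) to identify this with $[1_n,1_n,g]$, where $g(s)=\dot w_s$ is the \emph{scalar loop} traced by the scalar parts of the homotopy.

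The main obstacle is exactly this scalar loop: a homotopy between two scalar-normalized unitaries need not have trivial scalar winding, so $g$ may represent a nontrivial class in $\pi_1$ of the unitaries, and one cannot fill it in using constant-path data alone. The resolution, which is the crux of the whole argument, is that $[1_n,1_n,g]=0$ for \emph{every} loop $g$ of unitaries in $M_n(\bC)\subseteq M_n(\tilde B)$ based at $1_n$. I would prove this by exhibiting $(1_n,1_n,g)$ as elementary through a lift of the loop into $A$: put $a_t=g(t)\,1_{\tilde A}$, a loop of scalar unitaries in $\tilde A$, and $g_t(s)=g(ts)$, so that $a_0=a_1=1_n$, $g_0$ is constant, $g_1=g$, and $g_t(1)=g(t)=\phi(a_t)$ because $\phi$ fixes scalars. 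This absorbs the winding entirely within $A$ and is available only because $\phi=0$. Stabilization across matrix sizes is routine throughout.
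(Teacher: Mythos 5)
Your construction is correct and is essentially the paper's own: the paper likewise defines $\lambda_0([p]-[q])=[p,q,v]$ with $v$ a scalar partial isometry and $\lambda_1([u])=[1_n,u,g]$ with $g$ a scalar path from $1_n$ to $\dot u$, both enabled by the fact that $\phi=0$ forces all $B$-side data to be scalar. Your normalization $\dot u=1_n$ with the constant path, and your explicit verification that a scalar loop $g$ yields an elementary triple $(1_n,1_n,g)$ via the lift $a_t=g(t)$, simply spell out the well-definedness checks the paper declares straightforward.
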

	
	\begin{proof} If $p$ and $q$ are two projections in $M_\infty(\tilde A)$ with $[\dot p]=[\dot q]$, let $v$ be a partial isometry in $M_\infty(\bC)$ such that $v^*v=\dot p$ and $vv^*=\dot q$. If $u$ is a unitary in $M_n(\tilde A)$, let $g$ be any unitary in $C([0,1])\otimes M_n(\bC)$ such that $g(0)=1_n$ and $g(1)=\dot u$. Define \[\lambda_0([p]-[q])=[p,q,v]\qquad\lambda_1([u])=[1_n,u,g]\]
	For both $j=0,1$, it is straightforward to check that $\lambda_j$ is well-defined, additive, independent of the choices of $v$ and $g$, and that $\nu_j\circ\lambda_j$ is the identity.
	\end{proof}
	
	By combining all results in this subsection, we obtain part (i) of Theorem \ref{main}. The map $\mu_1$ is (by abuse of notation) the composition of the Bott map $\beta_B$ and $\mu_1$ from Proposition \ref{exact2}. It may therefore be written, for projections $p$ and $q$ in $M_n(\tilde B)$, as $\mu_1([p]-[q])=[1_n,1_n,f_pf_q^*]$, where $f_p(s)=e^{2\pi isp}$ for $0\leq s\leq1$. Since the Bott map is natural, this does not affect exactness.
	
	We also record the following immediate and useful consequences of part (i) of Theorem \ref{main}.
	
	\begin{corollary}\label{zero} We have the following. \begin{enumerate}[(i)]
	    \item If $K_*(A)=K_*(B)=0$, then $K_*(\phi)=0$.
	    \item If $\phi:A\to B$ is a $^*$-isomorphism, then $K_*(\phi)=0$.
	\end{enumerate}
	\end{corollary}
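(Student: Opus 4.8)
The plan is to read both statements off the six-term exact sequence of Theorem \ref{exact3} by a short diagram chase; in fact I would first isolate a single principle that drives both parts.

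The key observation I would record is this: \emph{if the induced maps $\phi_*:K_j(A)\to K_j(B)$ are isomorphisms for both $j=0,1$, then $K_j(\phi)=0$ for both $j$.} To prove it I would focus on the segment
\[K_1(A)\xrightarrow{\phi_*}K_1(B)\xrightarrow{\mu_0}K_0(\phi)\xrightarrow{\nu_0}K_0(A)\xrightarrow{\phi_*}K_0(B)\]
of the cyclic sequence. Since $\phi_*:K_0(A)\to K_0(B)$ is injective, exactness at $K_0(A)$ gives $\im\nu_0=\ker\phi_*=0$, so $\nu_0=0$ and hence $\ker\nu_0=K_0(\phi)$. Since $\phi_*:K_1(A)\to K_1(B)$ is surjective, exactness at $K_1(B)$ gives $\ker\mu_0=\im\phi_*=K_1(B)$, so $\mu_0=0$ and hence $\im\mu_0=0$. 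Exactness at $K_0(\phi)$ then forces $K_0(\phi)=\ker\nu_0=\im\mu_0=0$. The identical argument run through the segment containing $K_1(\phi)$ (using injectivity of $\phi_*$ on $K_1$ to kill $\nu_1$ and surjectivity of $\phi_*$ on $K_0$ to kill $\mu_1$) yields $K_1(\phi)=0$.

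With this principle in hand both parts are immediate. For (ii), a $^*$-isomorphism $\phi$ induces isomorphisms $\phi_*$ in $K$-theory, so the hypothesis is met directly. For (i), the vanishing $K_*(A)=K_*(B)=0$ makes each $\phi_*$ the (trivially bijective) zero map $0\to 0$, so the same principle applies; one can also see it even more cheaply, since in the segment $0=K_1(B)\to K_0(\phi)\to K_0(A)=0$ the group $K_0(\phi)$ is squeezed between two zero groups, and likewise for $K_1(\phi)$.

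I do not expect a genuine obstacle here: all the content is already packaged in Theorem \ref{exact3}, and what remains is a routine chase. The only point requiring care is the bookkeeping of the map directions around the cyclic diagram—specifically, keeping straight that injectivity of the relevant $\phi_*$ is what kills the outgoing map $\nu_j$, while surjectivity of the other $\phi_*$ is what kills the incoming map $\mu_j$, so that exactness can collapse the relative group between them.
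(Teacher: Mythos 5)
Your argument is correct and is exactly what the paper intends: Corollary \ref{zero} is stated there as an ``immediate'' consequence of Theorem \ref{exact3}, with no written proof, and the diagram chase you give (injectivity of the relevant $\phi_*$ killing $\nu_j$, surjectivity of the other killing $\mu_j$) is the standard way to make that immediacy precise. Nothing further is needed.
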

	
	\subsection{Proof of parts (ii) and (iii) of Theorem \ref{main}}
	
	Throughout this subsection, we will assume that
	\begin{equation}\label{comm1}
\begin{tikzcd}
0 \arrow[r] & I \arrow[r, "\iota_A"] \arrow[d, "\psi"] & A \arrow[r, "\pi_A"] \arrow[d, "\phi"] & A/I \arrow[r] \arrow[d, "\gamma"] & 0 \\
0 \arrow[r] & J \arrow[r, "\iota_B"]                     & B \arrow[r, "\pi_B"]                   & B/J \arrow[r]                    & 0
\end{tikzcd}
\end{equation}
is a commutative diagram with exact rows. We will abbreviate the induced maps $(\iota_A,\iota_B)_*$ and $(\pi_A,\pi_B)_*$ to $\iota_*$ and $\pi_*$, respectively.

	\begin{prop}\label{half} The sequence
\begin{center}
\begin{tikzcd}
K_0(\psi) \arrow[r, "\iota_*"] & K_0(\phi) \arrow[r, "\pi_*"] & K_0(\gamma)
\end{tikzcd}
\end{center}
is exact. If $\lambda_A:A/I\to A$ and $\lambda_B:B/J\to B$ are splittings of the rows in (\ref{comm1}) that keep the diagram commutative, then the sequence
\begin{center}
\begin{tikzcd}
0 \arrow[r] & K_0(\psi) \arrow[r, "\iota_*"] & K_0(\phi) \arrow[r, shift left, "\pi_*"] & K_0(\gamma) \arrow[l, shift left, "\lambda_*"] \arrow[r] & 0
\end{tikzcd}
\end{center}
is split exact, where $\lambda_*=(\lambda_A,\lambda_B)_*$.
\end{prop}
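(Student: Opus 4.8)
The plan is to verify the compositional and exactness facts in turn, reducing every triple to the normal form furnished by Lemma~\ref{useful} and then pushing the relevant witnesses back and forth along the surjections $\pi_A,\pi_B$ and, for the second assertion, the splittings $\lambda_A,\lambda_B$.

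First, $\pi_*\circ\iota_*=0$ is immediate, since $\pi_A\iota_A=0$ and $\pi_B\iota_B=0$ force $\pi_*\iota_*[e,f,b]=[0,0,0]$. For the reverse inclusion $\ker\pi_*\subseteq\im\iota_*$, I would take $[p,q,v]$ with $\pi_*[p,q,v]=0$ and use Lemma~\ref{useful}(i) to assume $q=1_n$ and $\dot p=\dot v=1_n$. Applying Lemma~\ref{useful}(ii) to $\gamma$ produces an integer $k$, a partial isometry $\bar w$ over $\widetilde{A/I}$ with source the scalar projection $r:=1_n\oplus0_{m-n}\oplus1_k$ and range $\pi_A(p)\oplus1_k$, together with a homotopy inside the unitaries of $rM_\infty(\widetilde{B/J})r$ from $r$ to $\bar Y:=(\pi_B(v)\oplus1_k)\gamma(\bar w)$. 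I then lift $\bar w$ to a partial isometry $w$ over $\tilde A$ with $w^*w=r$ and $ww^*=p\oplus1_k$; using $w^*$ as the intertwiner $c$ in the definition of $\cong_*$ turns $[p,1_n,v]$ into $[r,r,Y]$ with $Y=(v\oplus1_k)\phi(w)$ a unitary in $rM_\infty(\tilde B)r$ satisfying $\pi_B(Y)=\bar Y$. Lifting the homotopy to a path $Z_t$ of unitaries in $rM_\infty(\tilde B)r$ with $Z_0=r$, the endpoint obeys $\pi_B(Z_1)=\bar Y=\pi_B(Y)$, so $YZ_1^*\in rM_\infty(\tilde J)r$. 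Since $(r,r,Z_1)$ is elementary, Proposition~\ref{3}(ii) gives $[r,r,Y]=[r,r,YZ_1^*]$, and the latter triple lies in $\Gamma_0^*(\psi)$, exhibiting $[p,q,v]=\iota_*[r,r,YZ_1^*]$.

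For the split sequence, $\pi_*\lambda_*=\id$ is automatic from functoriality (Proposition~\ref{induce}), since $\pi_A\lambda_A=\id$ and $\pi_B\lambda_B=\id$; this yields surjectivity of $\pi_*$ and a section $\lambda_*$, so with the exactness just proved it remains only to show $\iota_*$ is injective, and here the splittings let me descend witnesses exactly. Given $(p,1_n,v)\in\Gamma_0^*(\psi)$, normalized by Lemma~\ref{useful}(i), with $\iota_*[p,1_n,v]=0$, I apply Lemma~\ref{useful}(ii) to $\phi$ to obtain a partial isometry $w$ over $\tilde A$ and a unitary $U=(\iota_B(v)\oplus1_k)\phi(w)$ in $rM_\infty(\tilde B)r$ homotopic to $r$. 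Because $p$ and $v$ already live over $\tilde I$ and $\tilde J$, one checks $\pi_A(w)$ is a unitary in the corner $rM_\infty(\widetilde{A/I})r$; correcting $w$ by $W:=\tilde\lambda_A(\pi_A(w))$ produces $w':=wW^*\in M_\infty(\tilde I)$ with $w'^*w'=r$ and $w'w'^*=p\oplus1_k$. The commutativity $\phi\lambda_A=\lambda_B\gamma$ transports the correction to the $B$-side, and the continuous retraction $X\mapsto X\,\tilde\lambda_B(\pi_B(X))^*$, which carries unitaries of $rM_\infty(\tilde B)r$ into unitaries of $rM_\infty(\tilde J)r$ and fixes $r$, sends a homotopy from $r$ to $U$ onto a homotopy in $rM_\infty(\tilde J)r$ from $r$ to $(v\oplus1_k)\psi(w')$. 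By Lemma~\ref{useful}(ii) applied to $\psi$, this forces $[p,1_n,v]=0$, so $\iota_*$ is injective and the sequence is split exact.

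The main obstacle is the lifting step in the half-exactness inclusion: converting the quotient-level partial isometry $\bar w$ into an honest partial isometry $w$ over $\tilde A$ with the prescribed source $r$ and a range projection matching $p\oplus1_k$, and likewise lifting the path $\bar Y_t$ of unitaries to a path over $\tilde B$. Both are standard consequences of surjectivity of $\pi_A,\pi_B$ (partial-isometry lifting with liftable source projection, and homotopy lifting for unitaries), but they carry the analytic content and require aligning two lifts of the same quotient projection; the $\cong_*$ and elementary bookkeeping, together with the index alignment around the $0_{m-n}$ block, are routine once the lifts are in hand. By contrast, the injectivity argument needs no lifting at all, since the commuting splittings furnish exact rather than approximate corrections, so the entire difficulty is concentrated in the inclusion $\ker\pi_*\subseteq\im\iota_*$.
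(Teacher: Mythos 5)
Your argument for the inclusion $\ker\pi_*\subseteq\operatorname{im}\iota_*$ has a fatal gap at the step ``I then lift $\bar w$ to a partial isometry $w$ over $\tilde A$ with $w^*w=r$ and $ww^*=p\oplus1_k$.'' Partial isometries do not lift along surjections of C$^*$-algebras, even when their source and range projections lift; this failure is exactly what the index map measures, so it cannot be dismissed as ``a standard consequence of surjectivity of $\pi_A$.'' Worse, in the present situation no partial isometry with source $r=1_n\oplus0_{m-n}\oplus1_k$ and range $p\oplus1_k$ exists at all unless $[p]=[1_n]$ in $K_0(\tilde A)$, i.e.\ unless $\nu_0([p,1_n,v])=0$ in $K_0(A)$ --- and elements of $\ker\pi_*=\operatorname{im}\iota_*$ have $\nu_0$-image lying in the image of $K_0(I)\to K_0(A)$, which is nonzero in general. (Take $B=J=0$ and $I=\bC\oplus0\subseteq A=\bC\oplus\bC$: then $\nu_0:K_0(\phi)\to K_0(A)$ is an isomorphism, $\ker\pi_*\cong K_0(I)\cong\bZ$, and a kernel element $[p,1_n,0]$ with $[p]\neq[1_n]$ admits no such $w$.) The paper's proof never lifts $\bar w$ itself: it dilates $\bar w$ to the unitary $z=\left[\begin{smallmatrix}\bar w&1-\bar w\bar w^*\\1-\bar w^*\bar w&\bar w^*\end{smallmatrix}\right]$, which is a product of two self-adjoint unitaries and hence homotopic to the identity, lifts $z$ to a unitary $U$ over $\tilde A$ (unitaries homotopic to $1$ do lift), and conjugates the triple by $U$, accepting the enlarged scalar projection $1_n\oplus0_{m-n}\oplus1_k\oplus0_l$ in exchange. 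The unitary $V$ lifting $\gamma(\bar w)(\pi_B(v)\oplus1_k)$ is obtained legitimately for the same reason (it is homotopic to the identity), and that replaces your path lift $Z_t$.

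The rest of your proposal essentially coincides with the paper's argument. Two small remarks: the vanishing of $\pi_*\iota_*$ needs one more word, since on unitizations $\widetilde{\pi_A\iota_A}$ is the scalar map rather than zero, so the image triple is a scalar triple, which one must check is isomorphic to an elementary one; and your injectivity argument for $\iota_*$ under the splittings --- correcting $w$ by $\lambda_A(\pi_A(w))^*$ and retracting the homotopy by $y_t\mapsto y_t\,\lambda_B(\pi_B(y_t))^*$ --- is exactly the paper's and is correct, precisely because there no lifting is required. The entire difficulty you correctly locate in the half-exactness step is resolved only by the $2\times2$ rotation trick, not by partial-isometry lifting.
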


\begin{proof} It is clear that the composition is zero. Conversely, suppose that $[1_n,q,v]$ is in the kernel of $\pi_*$, so $[1_n,\pi_A(q),\pi_B(v)]=0$. Find (in order):
\begin{enumerate}[(i)]
    \item an integer $m\geq n$ so that $q$ is in $M_m(\tilde A)$,
    \item an integer $k\geq0$ and a partial isometry $w$ in $M_{m+k}(\widetilde{A/I})$ such that $w^*w=\pi_A(q)\oplus1_k$ and $ww^*=1_n\oplus0_{m-n}\oplus1_k$ and $\gamma(w)(\pi_B(v)\oplus1_k)$ is homotopic to $1_n\oplus0_{m-n}\oplus1_k$ (use part (v) of Proposition \ref{3}),
    \item an integer $l\geq0$ and a unitary $z$ homotopic to $1_{m+k+l}$ in $M_{m+k+l}(\widetilde{A/I})$ such that $z(\pi_A(q)\oplus1_k\oplus0_l)z^*=1_n\oplus0_{m-n}\oplus1_k\oplus0_l$ and $\gamma(z)(\pi_B(v)\oplus1_k\oplus0_l)=(\gamma(w)(\pi_B(v)\oplus1_k))\oplus0_l$. For example, one may take $l=m+k$ and
\[z=\left[\begin{array}{cc}
    w & 1_{m+k}-ww^* \\
    1_{m+k}-w^*w & w^*
\end{array}\right],\]
see the discussion following Definition \ref{projpath}.
\item a unitary $U$ in $M_{m+k+l}(\tilde A)$ such that $\pi_A(U)=z$ (this is possible because $z$ is homotopic to $1_{m+k+l}$),
\item a unitary $V$ in $(1_n\oplus0_{m-n}\oplus1_k)M_{m+k}(\tilde B)(1_n\oplus0_{m-n}\oplus1_k)$ homotopic to $1_n\oplus0_{m-n}\oplus1_k$ such that $\pi_B(V)=\gamma(w)(\pi_B(v)\oplus1_k)$ (use (ii)).
\end{enumerate}
Then
\begin{align}
\nonumber[1_n,q,v]&=\left[\left[\begin{array}{cccc}
    1_n & 0 & 0 & 0 \\
    0 & 0_{m-n} & 0 & 0 \\
    0 & 0 & 1_k & 0 \\
    0 & 0 & 0 & 0_l
    \end{array}\right],\left[\begin{array}{ccc}
    q & 0 & 0 \\
    0 & 1_k & 0 \\
    0 & 0 & 0_l
    \end{array}\right],\left[\begin{array}{ccc}
    v & 0 & 0 \\
    0 & 1_k & 0 \\
    0 & 0 & 0_l
    \end{array}\right]\right]\\
\nonumber&=\left[\left[\begin{array}{cccc}
    1_n & 0 & 0 & 0 \\
    0 & 0_{m-n} & 0 & 0 \\
    0 & 0 & 1_k & 0 \\
    0 & 0 & 0 & 0_l
    \end{array}\right],U\left[\begin{array}{ccc}
    q & 0 & 0 \\
    0 & 1_k & 0 \\
    0 & 0 & 0_l
    \end{array}\right]U^*,\phi(U)\left[\begin{array}{ccc}
    v & 0 & 0 \\
    0 & 1_k & 0 \\
    0 & 0 & 0_l
    \end{array}\right]\right]\\
\nonumber&=\left[\left[\begin{array}{cccc}
    1_n & 0 & 0 & 0 \\
    0 & 0_{m-n} & 0 & 0 \\
    0 & 0 & 1_k & 0 \\
    0 & 0 & 0 & 0_l
    \end{array}\right],U\left[\begin{array}{ccc}
    q & 0 & 0 \\
    0 & 1_k & 0 \\
    0 & 0 & 0_l
    \end{array}\right]U^*,\phi(U)\left[\begin{array}{ccc}
    v & 0 & 0 \\
    0 & 1_k & 0 \\
    0 & 0 & 0_l
    \end{array}\right]\right]\\
\nonumber&\qquad+\left[\left[\begin{array}{cccc}
    1_n & 0 & 0 & 0 \\
    0 & 0_{m-n} & 0 & 0 \\
    0 & 0 & 1_k & 0 \\
    0 & 0 & 0 & 0_l
    \end{array}\right],\left[\begin{array}{cccc}
    1_n & 0 & 0 & 0 \\
    0 & 0_{m-n} & 0 & 0 \\
    0 & 0 & 1_k & 0 \\
    0 & 0 & 0 & 0_l
    \end{array}\right],\left[\begin{array}{cc}
    V^* & 0 \\
    0 & 0_l
    \end{array}\right]\right]\\
\nonumber&=\left[\left[\begin{array}{cccc}
    1_n & 0 & 0 & 0 \\
    0 & 0_{m-n} & 0 & 0 \\
    0 & 0 & 1_k & 0 \\
    0 & 0 & 0 & 0_l
    \end{array}\right],U\left[\begin{array}{ccc}
    q & 0 & 0 \\
    0 & 1_k & 0 \\
    0 & 0 & 0_l
    \end{array}\right]U^*,\phi(U)\left[\begin{array}{ccc}
    v & 0 & 0 \\
    0 & 1_k & 0 \\
    0 & 0 & 0_l
    \end{array}\right]\left[\begin{array}{cc}
        V^* & 0 \\
        0 & 0_l
    \end{array}\right]\right]
\end{align}
To get the first equality above, we added an elementary scalar triple. To get the second, notice that the two triples are isomorphic via the unitary $U$. In the third equality, the new triple being added is elementary because $V$ is homotopic to the identity. The fourth equality follows from part (ii) of Proposition \ref{3}. Regarding the elements of the latter triple, we have
\[\pi_A\left(U\left[\begin{array}{ccc}
    q & 0 & 0 \\
    0 & 1_k & 0 \\
    0 & 0 & 0_l
    \end{array}\right]U^*\right)=\pi_B\left(\phi(U)\left[\begin{array}{ccc}
    v & 0 & 0 \\
    0 & 1_k & 0 \\
    0 & 0 & 0_l
    \end{array}\right]\left[\begin{array}{cc}
    V^* & 0 \\
    0 & 0_l
    \end{array}\right]\right)=\left[\begin{array}{cccc}
    1_n & 0 & 0 & 0 \\
    0 & 0_{m-n} & 0 & 0 \\
    0 & 0 & 1_k & 0 \\
    0 & 0 & 0 & 0_l
    \end{array}\right]\]
from which it follows that $[1_n,q,v]$ is in the image of $\iota_*$.

For the split exact sequence, it is clear that $\lambda_*$ is a right inverse for $\pi_*$, so we need only show that $\iota_*$ is injective. Suppose that $(1_n,q,v)$ is a triple in $\Gamma_0(\psi)$ with $\dot q=\dot v=1_n$ and $[1_n,q,v]=0$ in $K_0(\phi)$. Choose $m\geq n$ so that $1_n\oplus0_{m-n}$ and $q$ are in $M_m(\tilde I)$ and $v$ is in $M_m(\tilde J)$. Use part (v) of Proposition \ref{3} to find an integer $k\geq0$ and a partial isometry $w$ in $M_{m+k}(\tilde A)$ with $w^*w=q\oplus1_k$ and $ww^*=1_n\oplus0_{m-n}\oplus1_k$ and $\phi(w)(v\oplus1_k)$ is homotopic to $1_n\oplus0_{m-n}\oplus1_k$. Let $y_t$ be such a homotopy, that is, $y_0=\dot y_t=1_n\oplus0_{m-n}\oplus1_k$ for all $t$ and $y_1=\phi(w)(v\oplus1_k)$. Set $x=\lambda_A(\pi_A(w^*))w$. Then $\pi_A(x)=1_n\oplus0_{m-n}\oplus1_k$ so that $x$ is in $M_{m+k}(\tilde I)$. We have $x^*x=q\oplus1_k$ and $xx^*=1_n\oplus0_{m-n}\oplus1_k$ and, since $\pi_B(v\oplus1_k)=1_n\oplus0_{m-n}\oplus1_k$,
\begin{align}\nonumber\psi(x)\left[\begin{array}{cc}
    v & 0 \\
    0 & 1_k
\end{array}\right]&=\psi(\lambda_A(\pi_A(w^*))w)\left[\begin{array}{cc}
    v & 0 \\
    0 & 1_k
\end{array}\right]\\
\nonumber&=\lambda_B(\pi_B(\phi(w^*)))\phi(w)\left[\begin{array}{cc}
    v & 0 \\
    0 & 1_k
\end{array}\right]\\
\nonumber&=\lambda_B\left(\pi_B\left(\left[\begin{array}{cc}
    v & 0 \\
    0 & 1_k
\end{array}\right]\phi(w^*)\right)\right)\phi(w)\left[\begin{array}{cc}
    v & 0 \\
    0 & 1_k
\end{array}\right]\\
\nonumber&=\lambda_B\left(\pi_B\left(\phi(w)\left[\begin{array}{cc}
    v & 0 \\
    0 & 1_k
\end{array}\right]\right)^*\right)\phi(w)\left[\begin{array}{cc}
    v & 0 \\
    0 & 1_k
\end{array}\right]
\end{align}
is homotopic to $1_n\oplus0_{m-n}\oplus1_k$ through $M_{m+k}(\tilde J)$ via $\lambda_B(\pi_B(y_t^*))y_t$. It follows that $[1_n,q,v]=0$ in $K_0(\psi)$.
\end{proof}
	
	Now we associate an index map $\partial_1:K_1(\gamma)\to K_0(\psi)$ to the diagram (\ref{comm1}).
	
	\begin{definition}\label{index}
	The index map $\partial_1:K_1(\gamma)\to K_0(\psi)$ is given by
	\[\partial_1([1_n,u,g])=\left[w\left[\begin{array}{cc}
	    1_n & 0 \\
	    0 & 0_l
	\end{array}\right]w^*,\left[\begin{array}{cc}
	    1_n & 0 \\
	    0 & 0_l
	\end{array}\right],\left[\begin{array}{cc}
	    h(1) & 0 \\
	    0 & 0_l
	\end{array}\right]\phi(w^*)\right]\]
	where $l\geq0$, $w$ is a unitary in $M_{n+l}(\tilde A)$ such that $\pi_A(w)(1_n\oplus0_l)=u\oplus0_l$, and $h$ is a unitary in $C([0,1])\otimes M_n(\widetilde{B})$ such that $h(0)=1_n$ and $\pi_B(h)=g$.
	\end{definition}
	
	Observe that such elements $l$, $w$, and $h$ always exist: one may take $l=n$, $w$ to be a lift of $u\oplus u^*$, and $h$ exists because $g$, as a unitary in $C([0,1])\otimes M_n(\widetilde{B/J})$ is homotopic to $1_n$. It is straightforward to verify that $\partial_1$ is independent of these choices, and depends only on the class of the triple $(1_n,u,g)$.
	
	The map $\partial_1$ is natural in the following sense. Suppose that
	\small
	\begin{center}
\begin{tikzcd}
             & 0 \arrow[rr] &                                                      & I' \arrow[rr] \arrow[dd, "\psi'" near start] &                                            & A' \arrow[rr] \arrow[dd, "\phi'" near start] &                                                        & A'/I' \arrow[rr] \arrow[dd, "\gamma'" near start] &   & 0 \\
0 \arrow[rr] &              & I \arrow[rr] \arrow[ru, "\sigma"] \arrow[dd, "\psi" near start] &                                   & A \arrow[rr] \arrow[dd, "\phi" near start] \arrow[ru] &                                   & A/I \arrow[rr] \arrow[dd, "\gamma" near start] \arrow[ru, "\tau"] &                                       & 0 &   \\
             & 0 \arrow[rr] &                                                      & J' \arrow[rr]                     &                                            & B' \arrow[rr]                     &                                                        & B'/J' \arrow[rr]                      &   & 0 \\
0 \arrow[rr] &              & J \arrow[rr] \arrow[ru, "\sigma'"]                   &                                   & B \arrow[rr] \arrow[ru]                    &                                   & B/J \arrow[rr] \arrow[ru, "\tau'"]                     &                                       & 0 &  
\end{tikzcd}
	\end{center}
	\normalsize
    is a commutative diagram with exact rows. Then the diagram
	
	\begin{center}
\begin{tikzcd}
K_1(\gamma) \arrow[rr, "\partial_1"] \arrow[dd, "{(\tau,\tau')_*}"] &  & K_0(\psi) \arrow[dd, "{(\sigma,\sigma')_*}"] \\
                                                          &  &                                  \\
K_1(\gamma') \arrow[rr, "\partial_1'"]                    &  & K_0(\psi')                      
\end{tikzcd}
	\end{center}
	is commutative. We leave the straighforward proof to the reader.
	
	\begin{prop}\label{connect} The sequence
    \begin{center}
\begin{tikzcd}
K_1(\phi) \arrow[r, "\pi_*"] & K_1(\gamma) \arrow[r, "\partial_1"] & K_0(\psi) \arrow[r, "\iota_*"] & K_0(\phi)
\end{tikzcd}
    \end{center}
    is exact and the diagram
    \begin{center}
\begin{tikzcd}
K_1(S(B/J)) \arrow[dd, "\theta_J^{-1}\circ\delta_2"] \arrow[rr, "\mu_1"] &  & K_1(\gamma) \arrow[rr, "\nu_1"] \arrow[dd, "\partial_1"] &  & K_1(A/I) \arrow[dd, "\delta_1"] \\
                                                          &  &                                                         &  &                                 \\
K_1(J) \arrow[rr, "\mu_0"]                           &  & K_0(\psi) \arrow[rr, "\nu_0"]                       &  & K_0(I)                         
\end{tikzcd}
    \end{center}
    is commutative.
	\end{prop}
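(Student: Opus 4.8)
The plan is to verify, in turn, the four exactness conditions for the sequence $K_1(\phi)\xrightarrow{\pi_*}K_1(\gamma)\xrightarrow{\partial_1}K_0(\psi)\xrightarrow{\iota_*}K_0(\phi)$ and the two commuting squares, working throughout with the explicit formula of Definition \ref{index} and with Lemmas \ref{useful} and \ref{useful1}, and using that $\partial_1$ is independent of the choices made in Definition \ref{index}. The two ``composition is zero'' statements come from convenient choices of lifts. For $\partial_1\circ\pi_*$, an element of $\im\pi_*$ is represented by $(1_n,\pi_A(u),\pi_B(g))$ coming from a triple $(1_n,u,g)$ in $\Gamma_1^*(\phi)$; evaluating $\partial_1$ with the globally defined lifts $w=u\oplus u^*$ over $\tilde A$ and $h=g$ over $CB$ yields the trivial triple $(1_n\oplus0_n,1_n\oplus0_n,1_n\oplus0_n)$, so the composite vanishes. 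For $\iota_*\circ\partial_1$, the output triple $(w(1_n\oplus0_l)w^*,\,1_n\oplus0_l,\,(h(1)\oplus0_l)\phi(w^*))$, regarded over $\phi$, is $*$-isomorphic via the unitary $w$ to $(1_n\oplus0_l,\,1_n\oplus0_l,\,h(1)\oplus0_l)$, and the latter is $*$-elementary through the path $s\mapsto h(s)\oplus0_l$ (note $h(0)=1_n$ since $h\in M_n(\widetilde{CB})$); hence this composite vanishes as well.

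The two genuine exactness inclusions are the technical heart, and in both the method is to run the construction of Definition \ref{index} backwards. For $\ker\iota_*\subseteq\im\partial_1$, I would take a class $[p,1_n,v]$ (put in this normal form by Lemma \ref{useful}(i)) with $\iota_*[p,1_n,v]=0$ and apply Lemma \ref{useful}(ii) for $\phi$ to obtain an integer $k$, a partial isometry $W$ over $\tilde A$ with $W^*W=\dot W=r:=1_n\oplus0_{m-n}\oplus1_k$ and $WW^*=p\oplus1_k$, and a homotopy carrying the unitary $(v\oplus1_k)\phi(W)$ to $r$ inside $rM_{m+k}(\tilde B)r$. Since $\pi_A(W)$ is a unitary in $rM_{m+k}(\widetilde{A/I})r$, it assembles into a unitary $u$ over $\widetilde{A/I}$, while the $\pi_B$-image of the homotopy assembles into a path $g$ over $\widetilde{B/J}$, producing a triple $(1_N,u,g)$ in $\Gamma_1^*(\gamma)$; the crux is that feeding this triple into Definition \ref{index} with $W$ as the lift returns $[p,1_n,v]$. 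For $\ker\partial_1\subseteq\im\pi_*$ I would dually apply Lemma \ref{useful}(ii) for $\psi$ to the vanishing class $\partial_1[1_n,u,g]=0$, trivializing the projection data over $\tilde I$ together with the partial-isometry data, and use this trivialization to lift $u$ to a unitary $\hat u$ over $\tilde A$ and $g$ to a compatible path $\hat g$ over $\tilde B$ with $\hat g(1)=\phi(\hat u)$, so that $(1_N,\hat u,\hat g)$ in $\Gamma_1^*(\phi)$ satisfies $\pi_*[1_N,\hat u,\hat g]=[1_n,u,g]$. In both inclusions the delicate point is organizational: matching matrix sizes, scalar parts, and the choice of lift in Definition \ref{index} so that the index map applied to the reassembled triple reproduces exactly the original class. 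I expect this bookkeeping to be the main obstacle.

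For the commuting squares, the right-hand one is essentially immediate. Indeed $\nu_0\partial_1[1_n,u,g]=[w(1_n\oplus0_l)w^*]-[1_n\oplus0_l]$ in $K_0(I)$, which is precisely the standard formula for $\delta_1[u]=\delta_1\nu_1[1_n,u,g]$ evaluated with the same unitary lift $w$, so the two composites coincide termwise. For the left-hand square I would first compute, for a based loop $g$ representing a class in $K_1(S(B/J))$, that $\mu_1([g])=[1_n,1_n,g]$ and hence, taking $l=0$ and $w=1_n$ in Definition \ref{index}, that $\partial_1\mu_1([g])=[1_n,1_n,h(1)]=\mu_0([h(1)])$, where $h\in M_n(\widetilde{CB})$ is a unitary lift of $g$; concretely $h$ is a lift of the loop $g$ to a path of unitaries over $\tilde B$ with $h(0)=1_n$ and $h(1)$ a unitary over $\tilde J$. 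It then remains to identify $[h(1)]$ with $\theta_J^{-1}\delta_2([g])$, which is the standard description of the connecting map of the suspended sequence $0\to SJ\to SB\to S(B/J)\to0$ as ``lift the loop and read off the endpoint''; I would either cite this or verify it by comparing the clutching construction defining $\theta_J$ against the $w(1\oplus0)w^*$ formula for the suspended index map. Combined with the computation above, this yields $\partial_1\mu_1=\mu_0\circ\theta_J^{-1}\delta_2$ and closes the square.
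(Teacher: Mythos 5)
Your proposal is correct and follows essentially the same route as the paper: the same normal forms from Lemmas \ref{useful} and \ref{useful1}, the same "run Definition \ref{index} backwards" strategy for the two nontrivial exactness inclusions (including the need to conjugate the corner $1_n\oplus0_{m-n}\oplus1_k$ to a standard one and to enlarge the partial isometry to a genuine unitary before it qualifies as the lift $w$), and the same termwise comparison for both squares. The only point you defer --- that $\theta_J^{-1}\delta_2$ of a loop class is the endpoint of a unitary path lift --- is exactly what the paper verifies by hand via the concatenated lift $\tilde g$ of $f\oplus f^*$, so your plan is complete in outline.
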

	
	\begin{proof}
	For ease of notation we will denote
	\[p=w\left[\begin{array}{cc}
	    1_n & 0 \\
	    0 & 0_l
	\end{array}\right]w^*\qquad v=\left[\begin{array}{cc}
	    h(1) & 0 \\
	    0 & 0_l
	\end{array}\right]\phi(w^*)\]
	It is a simple calculation to see that the right square in the diagram is commutative. For the left square, take $[f]$ in $K_1(S(B/J))$, where $f$ is in $M_n(\widetilde{S(B/J)})$ and $f(0)=1_n$. Find $h$ in $M_n(\widetilde{CB})$ such that $h(0)=1_n$ and $\pi_B(h)=f$. Then
	\[\partial_1(\mu_1([f]))=\partial_1([1_n,1_n,f])=[1_n,1_n,h(1)]\]
	Now find $g$ in $M_{2n}(\widetilde{SB})$ such that $g(0)=1_{2n}$ and $\pi_B(g)=f\oplus f^*$. Let
	\[\tilde g(t)=\left\{\begin{array}{cc}
	    g(2t) & 0\leq t\leq1/2 \\
	    h(2t-1)\oplus h(2t-1)^* & 1/2\leq t\leq1
	\end{array}\right.\]
	Then $[g(1_n\oplus0_n)g^*]-[1_n\oplus0_n]=[\tilde g(1_n\oplus0_n)\tilde g^*]-[1_n\oplus0_n]$ in $K_0(SJ)$, the latter being equal to $\theta_J([h(1)])$ since $\tilde g(1)=h(1)\oplus h(1)^*$. All in all, we have
	\[\mu_0(\theta_J^{-1}(\delta_2([f])))=\mu_0(\theta_J^{-1}([\tilde g(1_n\oplus0_n)\tilde g^*]-[1_n\oplus0_n]))=\mu_0([h(1)])=[1_n,1_n,h(1)]\]
	which shows commutativity of the left square.
	
	The composition $\partial_1\circ\pi_*$ is clearly zero since everything has a unitary lift. We also have $\iota_*\circ\partial_1$ zero since
	\[[p,1_n\oplus0_l,v]=[p,1_n\oplus0_l,v]+[1_n\oplus0_l,1_n\oplus0_l,h(1)^*\oplus0_l]=[p,1_n\oplus0_l,(1_n\oplus0_l)\phi(w^*)]\]
	Because $(1_n\oplus0_l,1_n\oplus0_l,h(1)^*\oplus0_l)$ is elementary in $\Gamma_0(\phi)$ and $(p,1_n\oplus0_l,(1_n\oplus0_l)\phi(w^*))\cong(1_n\oplus0_l,1_n\oplus0_l,1_n\oplus0_l)$.
	
	Now suppose that
	\[\partial_1([1_n,u,g])=[p,1_n\oplus0_l,v]=[w(1_n\oplus0_l)w^*,1_n\oplus0_l,(h(1)\oplus0_l)\phi(w^*)]=0\]
	Find $k\geq1$ and a partial isometry $x$ in $M_{n+l+k}(\tilde I)$ with $xx^*=p\oplus1_k$ and $\dot x=x^*x=1_n\oplus0_l\oplus1_k$, and such that $(v\oplus1_k)\psi(x)$ is homotopic to $1_n\oplus0_l\oplus1_k$. Let $y_t$ be such a homotopy, with $\dot y_t=y_0=1_n\oplus0_l\oplus1_k$ for all $t$ and $y_1=(v\oplus1_k)\psi(x)$. Set
	\[z=\left[\begin{array}{ccc}
	    1_n & 0 & 0 \\
	    0 & 0_l & 0 \\
	    0 & 0 & 1_k
	\end{array}\right]\left[\begin{array}{cc}
	    w^* & 0 \\
	    0 & 1_k
	\end{array}\right]x\]
	and
	\[h'(t)=\left\{\begin{array}{cc}
	    y_{2t} & 0\leq t\leq1/2 \\
	    (h(2t-1)^*\oplus0_l\oplus1_k)(v\oplus1_k)\psi(x) & 1/2\leq t\leq1
	\end{array}\right.\]
	Then $\pi_A(z)=u\oplus0_l\oplus1_k$ and
	\[\pi_B(h'(t))=\left\{\begin{array}{cc}
	    1_n\oplus0_l\oplus1_k & 0\leq t\leq1/2 \\
	    g(2t-1)\oplus0_l\oplus1_k & 1/2\leq t\leq1
	\end{array}\right.\]
	which is clearly homotopic to $g\oplus0_l\oplus1_k$. Moreover, $h'(1)=\phi(z)$. It follows that
	\[[1_n,u,g]=[1_n\oplus0_l\oplus1_k,u\oplus0_l\oplus1_k,g\oplus0_l\oplus1_k]=\pi_*([1_n\oplus0_l\oplus1_k,z,h'])\]
	Now suppose that $(p,1_n,v)$ is a triple in $\Gamma_0(\psi)$ with $[p,1_n,v]=0$ in $K_0(\phi)$. Choose $m\geq n$ such that $1_n\oplus0_{m-n}$ and $p$ are in $M_m(\tilde I)$ and $v$ is in $M_m(\tilde J)$. Find $k\geq0$ and a partial isometry $x$ in $M_{m+k}(\tilde A)$ with $xx^*=p\oplus1_k$ and $\dot x=x^*x=1_n\oplus0_{m-n}\oplus1_k$, and such that $(v\oplus1_k)\phi(x)$ is homotopic to $1_n\oplus0_{m-n}\oplus1_k$. Find a unitary $U$ in $M_{m+k}(\bC)$ such that
	\[U(1_n\oplus0_{m-n}\oplus1_k)U^*=1_{n+k}\oplus0_{m-n}\]
	and let $p'=U(p\oplus1_k)U^*$, $v'=U(v\oplus1_k)U^*$, and $x'=UxU^*$. Clearly $(p,1_n,v)\oplus(1_k,1_k,1_k)\cong(p',1_{n+k},v')$, $x'x'^*=p'$, $x'^*x'=1_{n+k}\oplus0_{m-n}$, and that $v'\phi(x')$ is homotopic to $1_{n+k}\oplus0_{m-n}$. Let $y_t$ be such a homotopy, with $\dot y_t=y_0=1_{n+k}\oplus0_{m-n}$ for all $t$ and $y_1=v'\phi(x')$. Notice that $\pi_A(x')=(1_{n+k}\oplus0_{m-n})\pi_A(x')(1_{n+k}\oplus0_{m-n})$, so we may regard $\pi_A(x')$ as a unitary in $M_{n+k}(\widetilde{A/I})$, and similarly we may regard $y_t$ as a path of unitaries in $M_{n+k}(\tilde B)$. Set $g(t)=\pi_B(y_t)$ and notice that
	\[g(1)=\pi_B(\phi(x'^*))\pi_B(v'^*)=\gamma(\pi_A(x'^*))\]
	so that $(1_{n+k},\pi_A(x'^*),g)$ is a triple in $\Gamma_1(\gamma)$. Moreover, we see that its image under $\partial_1$ is $[p,1_n,v]$ by using $l=2m+k-n$,
	\[w=\left[\begin{array}{cc}
	    x' & 1_{m+k}-x'x'^* \\
	    1_{m+k}-x'^*x' & x'^*
	\end{array}\right]\]
	in $M_{2(m+k)}(\tilde A)$ and $h(t)=y_t$.
	\end{proof}
	
	\begin{corollary} There is an isomorphism $\theta_\phi:K_1(\phi)\to K_0(S\phi)$. Moreover, the diagram
	\begin{center}
\begin{tikzcd}
K_1(SB) \arrow[rr, "\mu_1"] \arrow[dd, equal] &  & K_1(\phi) \arrow[rr, "\nu_1"] \arrow[dd, "\theta_\phi"] &  & K_1(A) \arrow[dd, "\theta_A"] \\
                              &  &                                 &  &                   \\
K_1(SB) \arrow[rr, "\mu_0"]            &  & K_0(S\phi) \arrow[rr, "\nu_0"]           &  & K_0(SA)          
\end{tikzcd}
	\end{center}
	is commutative.
	\end{corollary}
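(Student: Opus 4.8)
The plan is to realize $\theta_\phi$ as a relative index map in the sense of Definition \ref{index}. Suspending and coning $\phi$ produces the commutative diagram with exact rows
\begin{center}
\begin{tikzcd}
0 \arrow[r] & SA \arrow[r] \arrow[d, "S\phi"] & CA \arrow[r, "\pi_A"] \arrow[d, "C\phi"] & A \arrow[r] \arrow[d, "\phi"] & 0 \\
0 \arrow[r] & SB \arrow[r] & CB \arrow[r, "\pi_B"] & B \arrow[r] & 0
\end{tikzcd}
\end{center}
with $\pi_A,\pi_B$ evaluation at $1$. Here the role of $\gamma$ is played by $\phi$, the role of $\psi$ by $S\phi$, and the role of the middle vertical by $C\phi$. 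Definition \ref{index} then furnishes an index map $\partial_1\colon K_1(\phi)\to K_0(S\phi)$, and I would simply set $\theta_\phi:=\partial_1$.

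That $\theta_\phi$ is an isomorphism would follow at once from the exactness half of Proposition \ref{connect} applied to this diagram, namely
\begin{center}
\begin{tikzcd}
K_1(C\phi) \arrow[r, "\pi_*"] & K_1(\phi) \arrow[r, "\partial_1"] & K_0(S\phi) \arrow[r, "\iota_*"] & K_0(C\phi).
\end{tikzcd}
\end{center}
Since the cones $CA$ and $CB$ are contractible we have $K_*(CA)=K_*(CB)=0$, so Corollary \ref{zero}(i) gives $K_*(C\phi)=0$. The sequence then collapses to $0\to K_1(\phi)\xrightarrow{\partial_1}K_0(S\phi)\to 0$, forcing $\partial_1$ to be injective (exactness at $K_1(\phi)$) and surjective (exactness at $K_0(S\phi)$).

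For the commuting square I would specialize the naturality diagram of Proposition \ref{connect} to the same cone diagram. Its top row becomes $K_1(SB)\xrightarrow{\mu_1}K_1(\phi)\xrightarrow{\nu_1}K_1(A)$ and its bottom row $K_1(SB)\xrightarrow{\mu_0}K_0(S\phi)\xrightarrow{\nu_0}K_0(SA)$, with middle vertical $\theta_\phi=\partial_1$; these are exactly the two rows and the middle map in the statement. It then remains only to identify the two outer verticals that Proposition \ref{connect} attaches. The right-hand one is $\delta_1$, the index map of the cone sequence $0\to SA\to CA\to A\to 0$, which by the standard convention is precisely the suspension isomorphism $\theta_A$. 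The left-hand one is $\theta_{SB}^{-1}\circ\delta_2$, where $\delta_2$ is the index map of the suspended cone sequence of $B$; using the flip $SCB\cong C(SB)$ this sequence is the cone sequence of $SB$, whence $\delta_2=\theta_{SB}$ and the composite is the identity on $K_1(SB)$. With these identifications the diagram of Proposition \ref{connect} coincides with the asserted one.

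I expect the identification of the outer vertical maps to be the only genuine obstacle: one must reconcile the paper's index-map conventions with the suspension isomorphisms, i.e.\ verify $\delta_1=\theta_A$ and $\theta_{SB}^{-1}\circ\delta_2=\mathrm{id}$, which is the familiar fact that the boundary maps of (suspended) cone sequences realize $\theta$. Everything else is a direct specialization of Proposition \ref{connect} and Corollary \ref{zero}. As a robustness check, once the two squares are in hand the isomorphism can also be obtained by the five lemma from the two exact sequences of Theorem \ref{exact3} (for $\phi$ and for $S\phi$) together with the naturality of $\theta$, so the argument does not depend on having already assembled the full six-term sequence of Theorem \ref{six}.
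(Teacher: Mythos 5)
Your proposal is correct and is essentially the paper's own argument: the paper also defines $\theta_\phi$ as the index map $\partial_1$ of the suspension--cone diagram, gets the isomorphism from Proposition \ref{connect} together with $K_*(C\phi)=0$ via Corollary \ref{zero}, and reads off the commuting squares from the naturality diagram in Proposition \ref{connect}. The only difference is that you spell out the identification of the outer vertical maps ($\delta_1=\theta_A$ and $\theta_{SB}^{-1}\circ\delta_2=\mathrm{id}$), which the paper leaves implicit.
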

	
	\begin{proof} The map $\theta_\phi$ is the index map $\partial_1$ associated to the commutative diagram
	\begin{center}
\begin{tikzcd}
0 \arrow[r] & SA \arrow[r] \arrow[d, "S\phi"] & CA \arrow[r] \arrow[d, "C\phi"] & A \arrow[r] \arrow[d, "\phi"] & 0 \\
0 \arrow[r] & SB \arrow[r]                    & CB \arrow[r]                    & B \arrow[r]                   & 0
\end{tikzcd}
	\end{center}
	$CA$ and $CB$ are contractible, hence the relative groups $K_0(C\phi)$ and $K_1(C\phi)$ are trivial by Corollary \ref{zero}. It follows that $\theta_\phi$ is an isomorphism.
	\end{proof}
	
	An explicit description of $\theta_\phi$ is as follows. Let $(1_n,u,g)$ be a triple in $\Gamma_1(\phi)$, and let $w$ be a unitary in $C([0,1])\otimes M_{2n}(\tilde A)$ with $w(0)=1_{2n}$ and $w(1)=u\oplus u^*$. Then
	\[\theta_\phi([1_n,u,g])=\left[w\left[\begin{array}{cc}
	    1_n & 0 \\
	    0 & 0_n
	\end{array}\right]w^*,\left[\begin{array}{cc}
	    1_n & 0 \\
	    0 & 0_n
	\end{array}\right],\left[\begin{array}{cc}
	    g & 0 \\
	    0 & 0_n
	\end{array}\right]\phi(w^*)\right]\]
	
	\begin{corollary}
	The sequence
\begin{center}
\begin{tikzcd}
K_1(\psi) \arrow[r, "\iota_*"] & K_1(\phi) \arrow[r, "\pi_*"] & K_1(\gamma)
\end{tikzcd}
\end{center}
is exact. If $\lambda_A:A/I\to A$ and $\lambda_B:B/J\to B$ are splittings of the rows in (\ref{comm1}) that keep the diagram commutative, then the sequence
\begin{center}
\begin{tikzcd}
0 \arrow[r] & K_1(\psi) \arrow[r, "\iota_*"] & K_1(\phi) \arrow[r, shift left, "\pi_*"] & K_1(\gamma) \arrow[l, shift left, "\lambda_*"] \arrow[r] & 0
\end{tikzcd}
\end{center}
is split exact.
	\end{corollary}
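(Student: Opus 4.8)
The plan is to avoid repeating the elaborate partial-isometry bookkeeping of Proposition \ref{half} and instead deduce the $K_1$ statement from the $K_0$ statement by suspending the diagram (\ref{comm1}) and transporting everything along the isomorphism $\theta_\phi\colon K_1(\phi)\to K_0(S\phi)$ constructed in the corollary above.

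First I would suspend (\ref{comm1}). Since $-\otimes C_0(0,1)$ is an exact functor and $S(A/I)\cong SA/SI$ (and likewise for $B$), applying it to (\ref{comm1}) produces a commutative diagram with exact rows
\begin{center}
\begin{tikzcd}
0 \arrow[r] & SI \arrow[r] \arrow[d, "S\psi"] & SA \arrow[r] \arrow[d, "S\phi"] & S(A/I) \arrow[r] \arrow[d, "S\gamma"] & 0 \\
0 \arrow[r] & SJ \arrow[r]                    & SB \arrow[r]                    & S(B/J) \arrow[r]                      & 0
\end{tikzcd}
\end{center}
Proposition \ref{half}, applied to this suspended diagram, gives exactness of
\[
K_0(S\psi)\xrightarrow{(S\iota)_*}K_0(S\phi)\xrightarrow{(S\pi)_*}K_0(S\gamma),
\]
and, whenever splittings are present, split exactness of the corresponding augmented sequence with right inverse $(S\lambda)_*$.

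Next I would transport this along $\theta$. Recall that $\theta_\phi$ is by definition the index map $\partial_1$ of the cone sequence of $\phi$, and similarly for $\psi$ and $\gamma$. The maps $\iota_A,\iota_B$ induce a morphism from the cone sequence of $\psi$ to the cone sequence of $\phi$, and $\pi_A,\pi_B$ induce one from the cone sequence of $\phi$ to that of $\gamma$; after applying $C$ and $S$ these assemble into commutative diagrams with exact rows of exactly the shape required by the naturality of $\partial_1$ established above. That naturality therefore yields commuting squares
\begin{center}
\begin{tikzcd}
K_1(\psi) \arrow[r, "\iota_*"] \arrow[d, "\theta_\psi"] & K_1(\phi) \arrow[r, "\pi_*"] \arrow[d, "\theta_\phi"] & K_1(\gamma) \arrow[d, "\theta_\gamma"] \\
K_0(S\psi) \arrow[r, "(S\iota)_*"]                      & K_0(S\phi) \arrow[r, "(S\pi)_*"]                      & K_0(S\gamma)
\end{tikzcd}
\end{center}
Since the vertical maps are isomorphisms, exactness of the bottom row forces exactness of the top row, which proves the first assertion. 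For the split statement, the splittings $\lambda_A,\lambda_B$ suspend to splittings of the suspended diagram, and applying the same naturality to the morphism of cone sequences induced by $\lambda_A,\lambda_B$ shows that $\theta$ intertwines $\lambda_*$ with $(S\lambda)_*$; hence $\lambda_*$ is a right inverse for $\pi_*$ and $\iota_*$ is injective, giving split exactness.

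The one point deserving care—the main obstacle—is verifying that $\iota_A,\iota_B$ (respectively $\pi_A,\pi_B$ and $\lambda_A,\lambda_B$) genuinely assemble, after applying the cone and suspension functors, into the commutative ``cube'' with exact rows demanded by the naturality of $\partial_1$. This is routine: the rows are the (exact) cone sequences, and commutativity of the cube reduces to $\iota_B\circ\psi=\phi\circ\iota_A$ and the analogous identities for $\pi$ and $\lambda$, together with functoriality of $S$ and $C$. It is precisely here that the commutativity of (\ref{comm1}) is used.
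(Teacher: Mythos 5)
Your argument is correct and is essentially the paper's own proof: the paper likewise transports exactness from the suspended sequence $K_0(S\psi)\to K_0(S\phi)\to K_0(S\gamma)$ (exact by Proposition \ref{half}) through the natural isomorphisms $\theta_\psi,\theta_\phi,\theta_\gamma$, and treats the split case the same way. Your write-up simply makes explicit the naturality bookkeeping that the paper leaves to the reader.
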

	
	\begin{proof}
	The map $\theta_\phi$ is natural, so we have the commutative diagram
	
	\begin{center}
\begin{tikzcd}
K_1(\psi) \arrow[dd, "\theta_\psi"] \arrow[rr, "\iota_*"] &  & K_1(\phi) \arrow[rr, "\pi_*"] \arrow[dd, "\theta_\phi"] &  & K_1(\gamma) \arrow[dd, "\theta_\gamma"] \\
                                                          &  &                                                         &  &                                         \\
K_0(S\psi) \arrow[rr, "\iota_*"]                          &  & K_0(S\phi) \arrow[rr, "\pi_*"]                          &  & K_0(S\gamma)                           
\end{tikzcd}
	\end{center}
	in which, by Proposition \ref{half}, the bottom row is exact. It follows that the top row is exact as well. The proof for split exactness is similar.
	\end{proof}
	
	At this point we may unambiguously define higher relative groups $K_j(\phi)$ by $K_0(S^j\phi)$ and higher index maps $\partial_j:K_j(\gamma)\to K_{j-1}(\psi)$ to obtain the long exact sequence in part (ii) of Theorem \ref{main}. We proceed to prove that Bott periodicity holds so that the long exact sequence collapses to the six-term exact sequence in part (iii) of Theorem \ref{main}.
	
	For Bott periodicity we will follow the original proof in \cite{cuntz}. Recall that the \emph{Toeplitz algebra} $\T$ is the universal C$^*$-algebra generated by an isometry. Let $\pi:\T\to C(\bT)$ be the $^*$-homomorphism that sends the generating isometry to the function $z$ on $\bT$. The kernel of $\pi$ is isomorphic to $\K$, and by identifying $C_0((0,1))$ with elements in $C(\bT)$ that vanish at $1$ and letting $\T_0=\pi^{-1}(C_0((0,1)))$, we obtain the short exact sequence
	
	\begin{center}
	    \begin{tikzcd}
0 \arrow[r] & \K \arrow[r, hook] & \T_0 \arrow[r, "\pi"] & C_0((0,1)) \arrow[r] & 0
\end{tikzcd}
	\end{center}
	
	We will assume the nontrivial fact that $K_*(\T_0)=0$; we refer the reader to \cite{cuntz} for the original proof.
	\begin{lemma}\label{nuclear} If $C$ is in the bootstrap category (22.3.4 of \cite{blackadar}) and $K_*(C)=0$, then \emph{$K_*(\phi\otimes\id_C)=0$.} In particular, \emph{$K_*(\phi\otimes\id_{\T_0})=0$.}
	\end{lemma}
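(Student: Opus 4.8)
The plan is to deduce the vanishing from Corollary \ref{zero}(i), thereby reducing the problem to a computation of the \emph{ordinary} $K$-theory of the tensored algebras. Since $C$ lies in the bootstrap category it is in particular nuclear, so the tensor products $A\otimes C$ and $B\otimes C$ are unambiguous and $\phi\otimes\id_C:A\otimes C\to B\otimes C$ is a genuine $^*$-homomorphism, to which the relative theory developed above and Corollary \ref{zero}(i) apply.

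First I would invoke the Künneth theorem for C$^*$-algebras (23.1.3 of \cite{blackadar}). Because $C$ is in the bootstrap category, for any C$^*$-algebra $D$ there is a short exact sequence
\[0\too K_*(D)\otimes K_*(C)\too K_*(D\otimes C)\too\text{Tor}(K_*(D),K_*(C))\too0.\]
The hypothesis $K_*(C)=0$ makes both outer terms vanish, forcing $K_*(D\otimes C)=0$. Taking $D=A$ and then $D=B$ yields $K_*(A\otimes C)=K_*(B\otimes C)=0$. Corollary \ref{zero}(i), applied to the $^*$-homomorphism $\phi\otimes\id_C$, then gives $K_*(\phi\otimes\id_C)=0$.

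For the final assertion I would verify that $C=\T_0$ meets both hypotheses. The vanishing $K_*(\T_0)=0$ is the nontrivial fact borrowed from \cite{cuntz}. That $\T_0$ belongs to the bootstrap category follows from the reduced Toeplitz extension $0\to\K\to\T_0\to C_0(0,1)\to0$ together with closure of the bootstrap category under extensions: both $\K$ and $C_0(0,1)$ are separable and type I, hence lie in the category. The general statement then applies verbatim with $C=\T_0$.

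The only genuine external input is the Künneth theorem itself, so I expect the main (indeed essentially the only) obstacle to be confirming its hypotheses rather than applying it. This is settled by the single observation that membership of $C$ in the bootstrap category supplies both nuclearity, so the tensor products are well behaved, and the UCT, so the Künneth sequence is available; the rest of the argument is formal.
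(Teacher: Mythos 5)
Your argument is exactly the paper's: the K\"unneth theorem (23.1.3 of \cite{blackadar}) gives $K_*(A\otimes C)=K_*(B\otimes C)=0$, and Corollary \ref{zero} finishes the proof. Your additional verification that $\T_0$ lies in the bootstrap category is correct and a reasonable detail to supply, though the paper leaves it implicit.
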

	
	\begin{proof} By the K\"unneth Theorem for tensor products (see the main result of \cite{sch}), we have $K_*(A\otimes C)=K_*(B\otimes C)=0$. The conclusion follows from Corollary \ref{zero}.
	\end{proof}
	
	\begin{lemma}\label{stable}\emph{ $K_j(\phi\otimes\id_\K)\cong K_j(\phi)$ for $j=0,1$.}
	\end{lemma}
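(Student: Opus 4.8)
The plan is to fix a rank-one projection $e$ in $\K$ and use the two embeddings $a\mapsto a\otimes e$ and $b\mapsto b\otimes e$. These fit into a commutative square relating $\phi$ to $\phi\otimes\id_\K$, so by Proposition \ref{induce} they induce a homomorphism $\iota_*\colon K_j(\phi)\to K_j(\phi\otimes\id_\K)$; the whole task is to show $\iota_*$ is an isomorphism. I would deliberately avoid the mapping cone and Bott periodicity here, since those are developed later and invoking them would be circular (this lemma feeds into the Bott argument). Instead I would use only the six-term exact sequence of Theorem \ref{exact3}, which is already established, together with the standard fact that the corner embedding by $e$ induces isomorphisms $K_j(A)\cong K_j(A\otimes\K)$ and $K_j(B)\cong K_j(B\otimes\K)$.

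First I would assemble a ladder between the six-term sequence of Theorem \ref{exact3} for $\phi$ and the one for $\phi\otimes\id_\K$, taking the ordinary stability isomorphisms as the vertical maps on the four outer terms $K_j(A)$ and $K_j(B)$, and $\iota_*$ on the two relative terms. The squares coming from $\nu_0$ and $\nu_1$ commute on the nose: from $\nu_0[p,q,v]=[p]-[q]$ and $\nu_1[1_n,u,g]=[u]$ one sees that both routes tensor the underlying projection or unitary with $e$. The squares coming from the induced maps $\phi_*$ commute by functoriality of ordinary K-theory.

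The only squares that require real checking are those built from $\mu_0$ and $\mu_1$. Taking a unitary $u$ in $M_n(\tilde B)$ with $\dot u=1_n$, one route gives $\iota_*\mu_0([u])=[1_n\otimes e,1_n\otimes e,u\otimes e]$, while the other gives $[1_n,1_n,s_B(u)]$ with $s_B(u)=1_n\otimes1+(u-1_n)\otimes e$, where $s_B$ denotes the stability map. Decomposing the unit as $1_n=1_n\otimes(1-e)\;\oplus\;1_n\otimes e$ exhibits $s_B(u)$ as block diagonal, so by part (iii) of Proposition \ref{3} the triple $(1_n,1_n,s_B(u))$ is the direct sum of the elementary triple $(1_n\otimes(1-e),1_n\otimes(1-e),1_n\otimes(1-e))$ and $(1_n\otimes e,1_n\otimes e,u\otimes e)$; the elementary summand dies and the two classes agree. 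For $\mu_1$, which carries the Bott map $\beta_B$, the same corner splitting applies entrywise to the projection loops $f_p$, and commutativity then reduces to naturality of $\beta_B$ under $s_B$.

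Once the ladder commutes, the conclusion is immediate from the five lemma: applied to the segment $K_1(A)\to K_1(B)\to K_0(\phi)\to K_0(A)\to K_0(B)$ it forces $\iota_*$ to be an isomorphism on $K_0(\phi)$, and applied to $K_0(A)\to K_0(B)\to K_1(\phi)\to K_1(A)\to K_1(B)$ it does the same for $K_1(\phi)$. I expect the main obstacle to be the $\mu$-squares, specifically the bookkeeping of the unitization: one must keep the appended unit of $\widetilde{B\otimes\K}$ distinct from the rank-one projection $e$ so that the splitting $1=(1-e)\oplus e$ cleanly isolates an elementary triple. Everything else is formal.
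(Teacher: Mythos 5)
Your proposal is correct and is essentially the paper's own argument: a ladder between the five-term exact sequences of Theorem \ref{exact3} for $\phi$ and for $\phi\otimes\id_\K$, with the ordinary stability isomorphisms on the outer terms, followed by the five lemma. The only difference is that you verify the commutativity of the $\mu$-squares explicitly (the paper leaves this unchecked), and your splitting-off of the elementary triple on the complementary corner is a valid way to do it.
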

	
	\begin{proof}
	For either $j=0,1$, we have a commutative diagram with exact rows
	
	\begin{center}
\begin{tikzcd}
K_{1-j}(A) \arrow[dd] \arrow[r] & K_{1-j}(B) \arrow[r] \arrow[dd] & K_j(\phi) \arrow[r] \arrow[dd]   & K_j(A) \arrow[r] \arrow[dd] & K_j(B) \arrow[dd] \\
                            &                             &                                  &                             &                   \\
K_{1-j}(A\otimes\K) \arrow[r]   & K_{1-j}(B\otimes\K) \arrow[r]   & K_j(\phi\otimes\id_\K) \arrow[r] & K_j(A\otimes\K) \arrow[r]   & K_j(B\otimes\K)  
\end{tikzcd}
	\end{center}
	where all the vertical maps are induced by the embedding $a\mapsto a\otimes p$, where $p$ is any rank one projection in $\K$. All vertical maps except for the middle one are known to be isomorphisms. The five lemma then shows that the middle vertical arrow is an isomorphism.
	\end{proof}
	
	We now produce the Bott map. We have the commutative diagram
	
	\begin{center}
\begin{tikzcd}
0 \arrow[r] & A\otimes\K \arrow[r] \arrow[d, "\phi\otimes\id_\K"] & A\otimes\T_0 \arrow[r] \arrow[d, "\phi\otimes\id_{\T_0}"] & SA \arrow[r] \arrow[d, "S\phi"] & 0 \\
0 \arrow[r] & B\otimes\K \arrow[r]           & B\otimes\T_0 \arrow[r]           & SB \arrow[r]           & 0
\end{tikzcd}
	\end{center}
	Proposition \ref{connect} implies that
	
	\begin{center}
	    \begin{tikzcd}
K_1(\phi\otimes\id_{\T_0}) \arrow[r] & K_1(S\phi) \arrow[r] & K_0(\phi\otimes\id_\K) \arrow[r] & K_0(\phi\otimes\id_{\T_0})
\end{tikzcd}
	\end{center}
	is exact, and Lemma \ref{nuclear} and Lemma \ref{stable} together give an isomorphism $K_0(\phi)\cong K_1(S\phi)$. We let $\beta_\phi:K_0(\phi)\to K_1(S\phi)$ denote this isomorphism. We introduce a useful piece of notation before giving an explicit description of $\beta_\phi$.
	
	\begin{definition}\label{projpath}
	For a triple $(p,1_n,v)$ in $\Gamma_0(\phi)$, choose $m\geq n$ such that $p$ is in $M_m(\tilde A)$, and let
	\[p_v(s)=w(s)^*\left[\begin{array}{cc}
	    1_n & 0 \\
	    0 & 0_{2m-n}
	\end{array}\right]w(s)\]
	where $w$ is a path of unitaries in $M_{2m}(\tilde B)$ with $w(0)=1_{2m}$ and 
	\[w(1)=\left[\begin{array}{cc}
	    v & 1_m-vv^* \\
	    1_m-v^*v & v^*
	\end{array}\right]\]
	\end{definition}
	Note that such a path $w$ exists since
	\[\left[\begin{array}{cc}
	    v & 1_m-vv^* \\
	    1_m-v^*v & v^*
	\end{array}\right]=\left[\begin{array}{cc}
	    0 & 1_m \\
	    1_m & 0
	\end{array}\right]\left[\begin{array}{cc}
	    1_m-v^*v & v^* \\
	    v & 1_m-vv^*
	\end{array}\right]\]
	and the two unitaries on the right are self-adjoint.
	
	We then have $\beta_\phi([p,1_n,v])=[1_{2m},u,g]$ where
	\[u(t)=\exp\left(2\pi it\left[\begin{array}{cc}
	    p & 0 \\
	    0 & 0_m
	\end{array}\right]\right)\exp\left(-2\pi it\left[\begin{array}{cc}
	    1_n & 0 \\
	    0 & 0_{2m-n}
	\end{array}\right]\right)\]
	and
	\[g(s,t)=\exp(2\pi itp_v(s))\exp\left(-2\pi it\left[\begin{array}{cc}
	    1_n & 0 \\
	    0 & 0_{2m-n}
	\end{array}\right]\right)\]
	
	Now we complete the six-term exact sequence in part (iii) of Theorem \ref{main}. We define the exponential map $\partial_0:K_0(\gamma)\to K_1(\psi)$ to be the group homomorphism that makes the diagram
	\begin{center}
\begin{tikzcd}
K_0(\gamma) \arrow[rr, "\partial_0", dashed] \arrow[dd, "\beta_\gamma"] &  & K_1(\psi) \arrow[dd, "\theta_\psi"] \\
                                                                        &  &                                     \\
K_1(S\gamma) \arrow[rr, "\partial_2"]                                   &  & K_0(S\psi)                         
\end{tikzcd}
	\end{center}
	commutative. All maps in the above diagram are natural, so the sequence in part (iii) of Theorem \ref{main} is exact everywhere.
	
	An explicit description of $\partial_0$ is as follows. Given a triple $(p,1_n,v)$ in $\Gamma_0(\gamma)$, choose $m$ and $p_v$ as in Definition \ref{projpath}. Let $a$ in $M_m(\tilde A)$ be such that $a=a^*$, $\pi_A(a)=p$, and let $f$ in $M_{2m}(\widetilde{CB})$ be such that $f(t)=f(t)^*$ for all $t$, $\pi_B(f)=p_v$, and $f(1)=\phi(a)\oplus0_m$. Then we have
	\[\partial_0([p,1_n,v])=-[1_{2m},\exp(2\pi i(a\oplus0_m)),\exp(2\pi if)]\]
	
	\begin{remark}
	 It is interesting to note that split exactness was not necessary to prove part (iii) of Theorem \ref{main}, since split exactness is crucial to deduce that $K_*(\T_0)=0$ from the isomorphism $K_*(\T)\cong K_*(\bC)$ during the proof of Bott periodicity in \cite{cuntz}. Here, we were able to sneak around this difficulty using Corollary \ref{zero} and the fact that $K_*(\T_0)=0$.
	\end{remark}
	
	\subsection{Proof of part (iv) of Theorem \ref{main}}

Consider the commutative diagram

\begin{center}
\begin{tikzcd}
0 \arrow[r] & SB \arrow[r, "\iota_A"] \arrow[d, equal] & C_\phi \arrow[r, "\pi_A"] \arrow[d, "\sigma"] & A \arrow[r] \arrow[d, "\phi"] & 0 \\
0 \arrow[r] & SB \arrow[r, hook]           & CB \arrow[r, "\pi_B"]                       & B \arrow[r]                   & 0
\end{tikzcd}
\end{center}
with exact rows, where $\iota_A(f)=(0,f)$, $\pi_A(a,f)=a$, $\pi_B(f)=f(1)$, and $\sigma(a,f)=f$. Since $K_*(CB)=0$ because $CB$ is contractible, we have by part (i) of Theorem \ref{exc} that $\nu_j:K_j(\sigma)\to K_j(C_\phi)$ is an isomorphism for $j=0,1$. By part (iii) of Theorem \ref{main} and Corollary \ref{zero},

\begin{center}
\begin{tikzcd}
0 \arrow[rr]         &  & K_0(\sigma) \arrow[rr, "\pi_*"] &  & K_0(\phi) \arrow[dd] \\
                     &  &                      &  &                      \\
K_1(\phi) \arrow[uu] &  & K_1(\sigma) \arrow[ll, "\pi_*"'] &  & 0 \arrow[ll]        
\end{tikzcd}
\end{center}
	is exact and hence $\pi_*:K_j(\sigma)\to K_j(\phi)$ is an isomorphism for $j=0,1$.
	
	\begin{definition}
	$\Delta_\phi=\nu_j\circ\pi_*^{-1}$, for $j=0,1$.
	\end{definition}
	
	We provide a description of $\Delta_\phi$. For simplicity, we assume that $A$ and $B$ are unital and that $\phi(1)=1$. Given a triple $(p,1_n,v)$ in $\Gamma_0(\phi)$, we have 
	\[\Delta_\phi([p,1_n,v])=[(p\oplus0_m,p_v)]-[1_n\oplus0_{2m-n}]\]
	where $m$ and $p_v$ are as in Definition \ref{projpath}. Given a triple $(1_n,u,g)$ in $\Gamma_1(\phi)$, we have
	\[\Delta_\phi([1_n,u,g])=[(u,g)].\]
	The proof that the diagram given in part (iv) of Theorem \ref{main} is commutative is straightforward and is left to the reader.
 
\subsection{Proof of Theorem \ref{axiom}} We have already proven part (ii) (Lemma \ref{stable}), so it remains to prove parts (i) and (iii). Both proofs are quite easy with the natural transformation $\Delta$ from part (iv) of Theorem \ref{main} in hand.

The assumptions on $\alpha_t$ and $\beta_t$ in part (i) clearly imply that $\alpha_t\oplus C\beta_t$ is a continuous path of $^*$-homomorphisms from $C_\phi$ to $C_\psi$. It follows from homotopy invariance of C$^*$-algebra $K$-theory that $(\alpha_0\oplus C\beta_0)_*=(\alpha_1\oplus C\beta_1)_*$, and hence that $(\alpha_0,\beta_0)_*=(\alpha_1,\beta_1)_*$ via the natural isomorphism $\Delta$.

For part (iii), the existence of $\phi:A\to B$ is an easy consequence of the universal property of inductive limits, using the $^*$-homomorphisms $\nu_i\circ\phi_i$. It is clear that $(C_{\phi_i},\alpha_{ij}\oplus C\beta_{ij})$ forms an inductive system of C$^*$-algebras, and the limit is $(C_\phi,(\mu_i\oplus C\nu_i))$ by Proposition 4.9 of \cite{pedersen}. The result then follows from continuity of C$^*$-algebra $K$-theory and the natural isomorphism $\Delta$.

\subsection{Proof of Theorem \ref{exc}} Parts (i) and (ii) follow immediately from exactness. Part (iii) follows from applying part (iii) of Theorem \ref{main} to the diagram
\begin{center}
    \begin{tikzcd}
0 \arrow[r] & \ker\phi \arrow[r, "\iota_\phi"] \arrow[d] & A \arrow[r, "\phi"] \arrow[d, "\phi"] & B \arrow[r] \arrow[d, equal] & 0 \\
0 \arrow[r] & 0 \arrow[r, hook]           & B \arrow[r, equal]                       & B \arrow[r]                   & 0
\end{tikzcd}
\end{center}
Part (iv) follows similarly, using the diagram
\begin{center}
    \begin{tikzcd}
0 \arrow[r] & I \arrow[r, hook] \arrow[d, hook] & A \arrow[r, "\pi_A"] \arrow[d, equal] & A/I \arrow[r] \arrow[d] & 0 \\
0 \arrow[r] & A \arrow[r, equal]           & A \arrow[r]                       & 0 \arrow[r]                   & 0
\end{tikzcd}
\end{center}




\end{document}